\let\c@table\c@figure
\newtheorem{theorem}{Theorem}[section]
\newtheorem*{BooneHigmanConjecture}{Boone--Higman conjecture}
\newtheorem*{HigmanEmbeddingTheorem}{Higman Embedding Theorem}
\newtheorem{proposition}[theorem]{Proposition}
\newtheorem{corollary}[theorem]{Corollary}
\theoremstyle{definition}
\newtheorem{question}[theorem]{Question}
\newtheorem{problem}[theorem]{Problem}
\theoremstyle{remark}
\newtheorem{remark}[theorem]{Remark}
\newcommand{\newword}[1]{\textbf{#1}}
\newcommand{\Z}{\mathbb{Z}}
\newcommand{\N}{\mathbb{N}}
\newcommand{\R}{\mathbb{R}}
\newcommand{\Q}{\mathbb{Q}}
\newcommand{\C}{\mathfrak{C}}
\newcommand{\Aut}{\mathrm{Aut}}
\newcommand{\Out}{\mathrm{Out}}
\newcommand{\VA}{V\!\hspace{0.1em}\mathcal{A}}
\newcommand{\A}{\mathcal{A}}
\newcommand{\Finfty}{\mathrm{F}_{\!\hspace{0.04em}\infty}}
\title{Progress around the Boone--Higman conjecture}
\author[J.~Belk]{James Belk}
\address{School of Mathematics and Statistics, University of Glasgow, University Place, Glasgow, G128QQ, Scotland.}
\email{\href{mailto:jim.belk@glasgow.ac.uk
}{jim.belk@glasgow.ac.uk
}}
\author[C.~Bleak]{Collin Bleak}
\address{School of Mathematics and Statistics, University of St Andrews, St Andrews, Scotland KY8 6ER.}
\email{\href{mailto:collin.bleak@st-andrews.ac.uk}{collin.bleak@st-andrews.ac.uk}}
\author[F.~Matucci]{Francesco Matucci}
\address{Dipartimento di Matematica e Applicazioni, Universit\`{a} degli Studi di Milano--Bicocca, Milan 20125, Italy.}
\email{\href{mailto:francesco.matucci@unimib.it}{francesco.matucci@unimib.it}}
\author[M.~C.~B.~Zaremsky]{Matthew C. B. Zaremsky}
\address{Department of Mathematics and Statistics, University at Albany (SUNY), Albany, NY 12222.} \email{\href{mailto:mzaremsky@albany.edu}{mzaremsky@albany.edu}}
\date{}  
\begin{document}

\begin{abstract}A conjecture of Boone and Higman from the 1970's asserts that a finitely generated group $G$ has solvable word problem if and only if $G$ can be embedded into a finitely presented simple group.  We comment on the history of this conjecture and survey recent results that establish the conjecture for many large classes of interesting groups.
\end{abstract}

\maketitle

\section{Introduction}

In 1961, Graham Higman proved that a finitely generated group has a computable presentation if and only if it embeds into a finitely presented group~\cite{Hig}, a celebrated result now known as the Higman embedding theorem.  In 1973, Higman and William Boone made the following conjecture~\cite{Boone,BoHi}, which can be viewed as a natural companion to Higman's theorem.

\begin{BooneHigmanConjecture}
A finitely generated group $G$ has solvable word problem if and only if it embeds into some finitely presented simple group.
\end{BooneHigmanConjecture}

This conjecture remains open fifty years later.  One direction is easy: as observed by Alexander Kuznetsov~\cite{Kuz}, every finitely generated subgroup of a finitely presented simple group must have solvable word problem (see Proposition~\ref{prop:ThompsonsStatement}). The converse, however, remains a mystery.

Note that the conjecture is open even for finitely presented groups.  Indeed, Christopher Clapham proved that every finitely generated group with solvable word problem embeds into a finitely presented group with solvable word problem~\cite{Clapham}, so it would suffice to prove the conjecture for finitely presented groups.

The Boone--Higman conjecture motivated much of the early work on finitely presented simple groups, including Higman's 1974 work on the Higman--Thompson groups~\cite{HigmanSimple}, Elizabeth Scott's 1984 construction of finitely presented simple groups that contain $\mathrm{GL}_n(\Z)$~\cite{Scott1,Scott2}, and Claas R\"over's construction in 1999 of a finitely presented simple group that contains Grigorchuk's group~\cite{Rov}.  Over the last two decades,  further work by many researchers including Matthew Brin \cite{BrinHigherDimensional,BrinPresentations,BrinBakers}, Volodymyr Nekrashevych~\cite{Nek,Nek2}, James Hyde~\cite{BHM,BHM2}, and the authors~\cite{BBMZ, BZ, ZaremskyTaste} has further expanded the class of known finitely presented simple groups, yielding significant progress towards the conjecture.

In this article, we trace through the history of the conjecture as well as more recent developments, and then summarize the state of the art on Boone--Higman embeddings for natural classes of finitely presented groups. A list of prominent groups for which the Boone--Higman conjecture is known to hold is given in Theorem~\ref{thrm:summary}, and a list of prominent groups for which the conjecture is open is given in Problem~\ref{prob:summary}.

\subsection*{Acknowledgments} We are grateful to Matt Brin, James Hyde, Carl-Fredrik Nyberg Brodda, Slobodan Tanushevski, and Xiaolei Wu for helpful conversations, and we would like to thank Martin Bridson and an anonymous referee for several helpful comments and suggestions. The third author is a member of the Gruppo Nazionale per le Strutture Algebriche, Geometriche e le loro Applicazioni (GNSAGA) of the Istituto Nazionale di Alta Matematica (INdAM) and gratefully acknowledges the support of the Funda\c{c}\~ao para a Ci\^encia e a Tecnologia  (CEMAT-Ci\^encias FCT projects UIDB/04621/2020 and UIDP/04621/2020) and of the Universit\`a degli Studi di Milano--Bicocca (FA project 2020-ATE-0006 ``Strutture Algebriche''). The fourth author is supported by grant \#635763 from the Simons Foundation.

\section{The Higman Embedding Theorem}

Much of the motivation for the Boone--Higman conjecture comes from the famous Higman embedding theorem.  Here we give a short exposition of this theorem and its consequences. 

Let $G$ be a finitely generated group.  We say that $G$ is \newword{computably\footnote{We will follow the common practice in logic and computer science of using the word ``computable'' in place of the more traditional ``recursive''.  See~\cite{Soare}.} presented} if there exists a presentation
\[
\langle s_1,\ldots,s_n \mid r_1,r_2,\ldots\rangle
\]
for $G$ whose relators\footnote{In what follows, if $G=\langle X\mid R\rangle$ is a group presentation, then $R\subseteq (X\cup X^{-1})^*$ and we call these words the \newword{relators} of the presentation, or less precisely, of $G$. If $w\in R$ is a relator, the relation $w= 1$ holds in $G$.} $r_1,r_2,\ldots$ are computably enumerable%
\footnote{One might expect this definition to require the set of relators $\{r_1,r_2,\ldots\}$ to be a computable set, instead of just computably enumerable.  However, if $G$ has a presentation $\langle s_1,\ldots,s_n \mid r_1,r_2,\ldots\rangle$ with a computably enumerable set of relators, then $\langle s_1,\ldots,s_n,t \mid t,tr_1,t^2r_2,t^3r_3,\ldots\rangle$ is a presentation for $G$ with a computable set of relators, so the two definitions are equivalent.}%
.  That is, there exists an algorithm that outputs the list of these relators one after another, continuing indefinitely if the list of relators is infinite.

Clearly any finitely presented group is computably presented.  More generally we have the following well-known proposition:

\begin{proposition}\label{prop:HigmanFirstHalf}
Any finitely generated subgroup of a finitely presented group is computably presented.
\end{proposition}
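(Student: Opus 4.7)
The plan is to show that if $G=\langle s_1,\ldots,s_n\mid r_1,\ldots,r_m\rangle$ is finitely presented and $H\le G$ is generated by finitely many elements $h_1,\ldots,h_k$, then $H$ admits a presentation on $k$ generators whose relators form a computably enumerable set. The target presentation will be
\[
H=\langle y_1,\ldots,y_k \mid R\rangle,
\]
where $y_i$ corresponds to $h_i$ and $R$ consists of all words $W(y_1,\ldots,y_k)$ in the $y_i^{\pm 1}$ that are trivial in $H$. It remains to exhibit an algorithm enumerating $R$.

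The first step is to fix, once and for all, words $w_1,\ldots,w_k\in(s_1^{\pm 1}\cup\cdots\cup s_n^{\pm 1})^{*}$ with $h_i=w_i$ in $G$. This is data built into the problem; we are not required to decide membership in $H$. Given any word $W=y_{i_1}^{\varepsilon_1}\cdots y_{i_\ell}^{\varepsilon_\ell}$ in the $y_j$'s, we can mechanically form its \emph{substitution} $\widehat W := w_{i_1}^{\varepsilon_1}\cdots w_{i_\ell}^{\varepsilon_\ell}$, a word in the $s_j^{\pm 1}$. Then $W$ is trivial in $H$ if and only if $\widehat W$ is trivial in $G$.

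The second step is the standard observation that the set $N$ of words in the $s_j^{\pm 1}$ that equal $1$ in $G$ is computably enumerable whenever $G$ is finitely presented. Indeed, $N$ is exactly the normal closure in the free group $F(s_1,\ldots,s_n)$ of the finite set $\{r_1,\ldots,r_m\}$, and one can enumerate this normal closure by the usual breadth-first search: repeatedly list all words obtained from previously listed words by free reduction/expansion and by multiplication by conjugates $u r_i^{\pm 1} u^{-1}$ of the given relators, for $u$ ranging over an enumeration of $F(s_1,\ldots,s_n)$. (Note that we are not asserting that $N$ is \emph{decidable}, which would be equivalent to $G$ having solvable word problem; we only need semidecidability, which is free.)

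Finally, dovetail: enumerate in order of length the words $W$ in $y_1^{\pm 1},\ldots,y_k^{\pm 1}$, and in parallel run the enumeration of $N$ from the previous paragraph. Whenever $\widehat W$ appears in the enumeration of $N$, output $W$ as a relator of $H$. This procedure enumerates exactly the set $R$ of all words in the $y_j$'s that become trivial in $G$ upon substitution, and hence exactly the relators of $H$ with respect to the generators $h_1,\ldots,h_k$. Thus $H$ is computably presented. There is no real obstacle here; the only subtle point is to resist the temptation to decide membership in $H$ or to decide $R$, neither of which is available in general — we only need semidecidability of triviality in $G$, which comes for free from the finite presentation.
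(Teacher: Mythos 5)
Your proof is correct and follows essentially the same route as the paper: fix words $w_i$ over the generators of $G$ representing the subgroup generators, use the fact that the set of words equal to $1$ in a finitely presented group is computably enumerable, and extract the relators of $H$ from that enumeration. The only (cosmetic, arguably cleaner) difference is the direction of the matching: you enumerate candidate relators $W$ in the abstract generators and wait for their substitutions $\widehat W$ to appear among the identity words of $G$, whereas the paper scans the enumerated identity words of $G$ and checks which ones decompose as concatenations of the $w_i$.
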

\begin{proof}
Let $G$ be a finitely presented group with finite generating set $S$, and let $H$ be a finitely generated subgroup of~$G$.  Fix words $w_1,\ldots,w_n$ over $S$ that represent the generators of~$H$ and their inverses.  Since $G$ is finitely presented, there exists an initial algorithm $\mathcal{A}$ that lists all possible consequences of the relators in~$G$, i.e.~it lists all words in $S$ that represent the identity in~$G$. 
Each time $\mathcal{A}$ outputs a word~$w$,  then a secondary algorithm can check whether $w$ decomposes exactly as a concatenation of words in $\{w_1,\ldots,w_n\}$, and if so, then this gives us a relator for $H$ (expressed in terms of the $w_i$) that can be output.  Note that any product in $H$ that is the identity in $H$ will have a corresponding concatenation of words from $\{w_1,\ldots,w_n\}$ that appears in the list of relators for $G$ (and so, it is an output of the first algorithm).  Thus, the secondary algorithm will output a relator for each product in $H$ that evaluates as the identity, resulting in a computable presentation for $H$.
\end{proof}

\begin{remark}
The proof of the above proposition works just as well if the larger group $G$ is computably presented.  Thus any finitely generated subgroup of a computably presented group is computably presented.
\end{remark}

\begin{remark}\label{rem:Listrelators}In the above proof, it might seem surprising that we were able to list all of the words for the identity in~$H$, but this is possible in any computably presented group, simply by listing all possible consequences of a computably enumerable set of relators.  Indeed, a group is computably presented if and only if the set of all words for the identity is computably enumerable.\end{remark}

In 1961, Higman proved the converse to Proposition~\ref{prop:HigmanFirstHalf}.

\begin{HigmanEmbeddingTheorem}[\cite{Hig}]Let $G$ be a finitely generated group.  Then $G$ is computably presented if and only if $G$ embeds into a finitely presented group.
\end{HigmanEmbeddingTheorem}

The proof of this theorem is a fairly complicated construction (see, e.g.~\cite[Chapter~12]{Rotman} for an in-depth development). 
Starting with a group $G$ and Turing machine $T$ that can write a set of relators for $G$, the proof uses the structure of~$T$ to construct a finitely presented group $\widetilde{G}$ that contains $G$ via a certain sequence of HNN extensions.

There are a few basic consequences of the Higman embedding theorem that will be important for us later.  First, we enlarge the class of groups that we call computably presented as follows: if $G$ is any countable group, we say that $G$ is \newword{computably presented} if $G$ has a presentation
\[
\langle s_1, s_2, s_3,\ldots \mid r_1,r_2,r_3,\ldots\rangle
\]
with an enumerated generating set $s_1,s_2,s_3,\ldots$ for which the relators $r_1,r_2,r_3,\ldots$ are computably enumerable. Note that this new definition is equivalent to the one given earlier in the case of a finitely generated group\footnote{If $G$ is finitely generated then some initial subset $s_1,\ldots,s_n$ of the given enumerated generating set generates.  Using the given presentation, we can compute words for each of the remaining generators in terms of these, and substituting these words into the relators leads to a computable presentation with generating set $s_1,\ldots,s_n$.}.

\begin{corollary}\label{cor:InfinityGeneratedHigman}
Every countable, computably presented group $G$ embeds into a finitely presented group.
\end{corollary}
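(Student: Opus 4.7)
The plan is to reduce the statement to the finitely generated case of the Higman Embedding Theorem, by first embedding $G$ into a finitely generated, computably presented group.

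Write the given computable presentation of $G$ as $\langle s_1, s_2, \ldots \mid r_1, r_2, \ldots\rangle$. I would use a Higman--Neumann--Neumann style construction: take the free product of $G$ with a free group $\langle a, b\rangle$ of rank $2$, and form the HNN extension
\[
H \;=\; \bigl\langle\, G * \langle a, b\rangle,\; t \;\bigm|\; t\hspace{0.08em}(a^{n-1} b\hspace{0.08em} a^{-(n-1)})\hspace{0.08em}t^{-1} = s_n\hspace{0.08em} a^{n-1} b\hspace{0.08em} a^{-(n-1)} \text{ for all } n \geq 1 \,\bigr\rangle.
\]
The families $(a^{n-1} b\hspace{0.08em} a^{-(n-1)})_{n\geq 1}$ and $(s_n\hspace{0.08em} a^{n-1} b\hspace{0.08em} a^{-(n-1)})_{n\geq 1}$ each freely generate a free subgroup of $G * \langle a, b\rangle$ (the first by a classical computation in the free group of rank $2$, the second by a normal-form argument in the free product), so this HNN extension is well defined and $G$ embeds into $H$ via the base-group embedding. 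Solving the defining relation of the HNN extension yields $s_n = w_n$, where
\[
w_n \;:=\; t\, a^{n-1} b\hspace{0.08em} a^{-(n-1)}\, t^{-1}\, a^{n-1} b^{-1} a^{-(n-1)},
\]
so $H$ is generated by the finite set $\{a, b, t\}$.

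Next, I would verify that $H$ is computably presented on $\{a,b,t\}$. By a Tietze transformation eliminating each generator $s_n$ in favor of the word $w_n$, the HNN defining relations become trivial and a presentation of $H$ on $\{a,b,t\}$ is obtained by taking, for each relator $r_k$ of $G$, the word $r_k(w_1, w_2, \ldots)$ gotten by substituting $w_n$ for $s_n$. Since the sequence $(r_k)$ is computably enumerable and each $w_n$ is a computable function of $n$, this list of substituted relators is computably enumerable, so $H$ is a finitely generated, computably presented group.

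Finally, I would apply the Higman Embedding Theorem (in its finitely generated form, already stated above) to $H$ to obtain an embedding of $H$, and hence of $G$, into a finitely presented group. The main technical step is justifying the two freeness claims needed to make the HNN extension well defined; both are classical, and once they are in hand the finite generation of $H$ and the computability of the substituted presentation are routine bookkeeping.
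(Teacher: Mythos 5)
Your proof is correct and follows essentially the same route as the paper: the paper simply cites the Higman--Neumann--Neumann embedding of a countable group into a finitely generated group, notes that this construction preserves computable presentability, and then invokes the Higman embedding theorem. You carry out that HNN construction explicitly (with three generators rather than two, which is harmless) and verify the computable-enumerability bookkeeping by hand, but the underlying argument is the same.
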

\begin{proof}
It is proven in \cite{HNN} that every countable group $G$ can be embedded into a group $G^*$ that is generated by two elements. Furthermore, this construction has the property that if $G$ is computably presented in the more general sense, then $G^*$ is in the original sense.  The corollary now follows from the Higman embedding theorem. \end{proof}

\begin{remark}The converse of Corollary~\ref{cor:InfinityGeneratedHigman} need not hold.  That is, an arbitrary countable subgroup $H$ of a finitely presented group $G$ need not be computably presented.  The reason the proof of Proposition~\ref{prop:HigmanFirstHalf} does not go through is that $H$ might not be a computably generated subgroup, i.e.~there might not exist an algorithm that lists a generating set for $H$ as words in the generators for~$G$.  An  algorithmic characterization of arbitrary subgroups of finitely presented groups has been given by Sacerdote~\cite{Sac1}.
\end{remark}

\begin{corollary}\label{cor:HigmanAbelian}
Every countable abelian group embeds into a finitely presented group.
\end{corollary}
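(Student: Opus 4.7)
The plan is to reduce to Corollary~\ref{cor:InfinityGeneratedHigman} by producing a single countable abelian group $B$, computably presented in the generalized sense, into which every countable abelian group embeds. A direct appeal of that corollary to the given group $A$ cannot work in general: there are continuum many countable abelian groups up to isomorphism but only countably many computably presented groups, so most countable abelian groups are not themselves computably presented.

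For the universal target I would invoke the classical structure theorem for countable divisible abelian groups. Given a countable abelian group $A$, embed $A$ into its divisible hull $D(A)$, which is again countable. The structure theorem gives $D(A)\cong \Q^{(I)} \oplus \bigoplus_p \Z(p^\infty)^{(I_p)}$ for some at most countable index sets $I$ and $I_p$, with $p$ ranging over primes. Consequently $A$ embeds into
\[
B \;:=\; \Q^{(\N)} \oplus \bigoplus_{p\text{ prime}} \Z(p^\infty)^{(\N)}.
\]

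Next I would write down an explicit computable presentation of $B$. Each copy of $\Q$ can be presented as $\langle q_1, q_2, \ldots \mid q_n = q_{n+1}^{n+1}\rangle$, with $q_n$ playing the role of $1/n!$, and each Prüfer factor $\Z(p^\infty)$ as $\langle r_1, r_2, \ldots \mid r_1^p = 1,\ r_{n+1}^p = r_n\rangle$. Taking $\N$-many copies of each, indexing the copies and primes by natural numbers, and adjoining commutators among all pairs of generators, one obtains a presentation of $B$ whose relators are enumerable by a simple loop over copy-index, prime, and exponent. Hence $B$ is computably presented in the sense introduced before Corollary~\ref{cor:InfinityGeneratedHigman}, and that corollary embeds $B$, and therefore $A$, into a finitely presented group.

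The main obstacle is conceptual rather than technical: one must resist trying to present $A$ directly and instead pass to the structurally uniform $B$, whose generators and relators are visibly algorithmic. Verifying that the enumeration is actually computable is routine, since the primes are computably enumerable and all exponents appearing in the defining relations are computable functions of the indices.
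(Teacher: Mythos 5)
Your proposal is correct and follows essentially the same route as the paper: embed the given group into its divisible hull, use the structure theorem for divisible abelian groups to land in a fixed universal countable divisible group (your $B$ is isomorphic to the paper's $\bigl(\bigoplus_\omega \Q\bigr)\oplus\bigl(\bigoplus_\omega \Q/\Z\bigr)$, since $\Q/\Z\cong\bigoplus_p \Z(p^\infty)$), write down a computable presentation of it, and apply Corollary~\ref{cor:InfinityGeneratedHigman}. The only difference is that you spell out the explicit presentations of $\Q$ and the Pr\"ufer groups, which the paper leaves as a routine remark.
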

\begin{proof}
Consider the group $G=\bigl(\bigoplus_\omega \Q\bigr) \oplus \bigl(\bigoplus_\omega \Q/\Z\bigr)$.  It is not hard to give an explicit computable presentation of this group, so by Corollary~\ref{cor:InfinityGeneratedHigman} it embeds into a finitely presented group.  Thus, it suffices to prove that every countable abelian group embeds into~$G$.

This requires some theory of countable abelian groups.  An abelian group $A$ is called \newword{divisible} if for every $n\in\N$ and $a\in A$ there exists an element $a'\in A$ so that $na'=a$.  It is easy to prove~\cite[Theorem~4.1.4]{Fuchs} that every countable abelian group embeds into a countable divisible abelian group. Furthermore, every divisible abelian group is a direct sum of copies of $\Q$ and copies of quasicyclic groups, that is, groups of the form $\Z(p^\infty) = \bigcup_{k=1}^\infty \Z/p^k\Z$ for different primes~$p$~\cite[Theorem~4.3.1]{Fuchs}. 
 Since each $\Z(p^\infty)$ embeds into $\Q/\Z$, we conclude that every countable divisible abelian group embeds into~$G$, and the result follows.
\end{proof}

In particular let us emphasize that the additive group $\Q$ of the rational numbers embeds into a finitely presented group.  Higman's construction can be carried out explicitly (see~\cite{Mik}), but the resulting presentation has a very large number of generators and relators.  Higman was for many years interested in finding a more explicit and natural example of a finitely presented group that contains~$\Q$ \cite{Johnson}, and in 1999 Martin Bridson and Pierre de la Harpe submitted this question to the Kourovka notebook of unsolved problems in group theory~\cite[Problem~14.10]{Kourovka}.  A solution to this problem has recently been discovered by the first author, James Hyde, and the third author (see \cite{BHM}), and indeed, an explicit example of a finitely presented group that contains every countable abelian group is now known (see Section~\ref{sec:VA}).

Bridson and de la Harpe also asked about the computably presented groups $\mathrm{GL}_n(\Q)$, and this part of their question remains open.

\begin{problem}\label{prob:GLQ}
For $n\geq 2$, find an explicit and natural example of a finitely presented group that contains~$\mathrm{GL}_n(\Q)$.
\end{problem}

Another surprising corollary of the Higman embedding theorem is the following.

\begin{corollary}
There exists a finitely presented group $G$ into which every finitely presented group embeds.
\end{corollary}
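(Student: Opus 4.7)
The plan is to exhibit a countable, computably presented group into which every finitely presented group embeds, and then apply Corollary~\ref{cor:InfinityGeneratedHigman}. The key point is that, while there are uncountably many countable groups in general, there are only countably many finite presentations, since each is a finite string over a countable alphabet. Moreover, the list of all finite presentations can be enumerated by an algorithm.

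First, I would enumerate all finite presentations as $\langle S_1\mid R_1\rangle,\,\langle S_2\mid R_2\rangle,\ldots$, arranging (by relabeling generators) that the sets $S_i$ are pairwise disjoint. Let $H_i$ be the finitely presented group defined by $\langle S_i\mid R_i\rangle$, and let $H$ be their free product. Then $H$ admits the natural presentation with generating set $\bigsqcup_i S_i$ and relator set $\bigsqcup_i R_i$. Since the enumeration of finite presentations is algorithmic, both the generating set and the relator set are computably enumerable, so $H$ is computably presented in the general sense of the paragraph following Corollary~\ref{cor:InfinityGeneratedHigman}'s statement (before its proof). Applying Corollary~\ref{cor:InfinityGeneratedHigman} yields a finitely presented group $G$ into which $H$ embeds.

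To finish, I would observe that every finitely presented group $K$ is, by choice of some finite presentation, isomorphic to some $H_i$. Since each free factor $H_i$ embeds into $H$ via the canonical inclusion, and $H$ embeds into $G$, the group $K$ embeds into $G$, as desired.

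There is no real obstacle here; the only point requiring any care is verifying that the presentation of $H$ really is computably enumerable, which boils down to the standard fact that the set of finite strings over a fixed countable alphabet can be effectively enumerated. Note that we do not need to (and cannot, by the Adyan--Rabin theorem) decide which presentations define isomorphic groups; it is harmless that the list $H_1,H_2,\ldots$ contains each finitely presented group infinitely often.
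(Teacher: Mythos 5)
Your proposal is correct and follows essentially the same route as the paper: algorithmically enumerate all finite presentations, combine the resulting groups into one countable computably presented group, and apply Corollary~\ref{cor:InfinityGeneratedHigman}. The only difference is that you use a free product where the paper uses a direct sum; both combinations are computably presented and contain every $H_i$, so the argument is the same in substance (your free product arguably makes the presentation even more transparent, since no commutation relators are needed).
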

\begin{proof}
It is easy to make an algorithm that outputs a list of all possible finite presentations of groups.  Let $G_1,G_2,\ldots$ be the resulting groups. 
 Then the direct sum $\bigoplus_{n\in\N} G_n$ is computably presented, so by Corollary~\ref{cor:InfinityGeneratedHigman} it embeds into some finitely presented group~$G$.
\end{proof}

No explicit example of such a group $G$ is known, though it is known that there exists an example with 21 relations~\cite{Val}.  Note that such a group $G$ must have unsolvable word problem, among other pathological properties.

\section{The Boone--Higman conjecture}\label{sec:BH-Background}

In this section we describe the motivation for the Boone--Higman conjecture, some of the consequences if the conjecture is true, and some partial general results.

\subsection{Motivation}
A natural question arising from the Higman embedding theorem is whether there might be other theorems of this type.  That is, given an algorithmic property $P$ of groups, can we find natural class $K$ of groups such that
\[
G\text{ has property }P\quad\Leftrightarrow\quad G\text{ embeds into a group from class }K.
\]
One obvious property $P$ to consider is having solvable word problem. Indeed, given a group $G$ with finite generating set $S$, let $W$ be the set of words over $S$ that represent the identity in $G$.  Then:
\begin{enumerate}
    \item $G$ is computably presented if and only if $W$ is computably enumerable.\smallskip
    \item $G$ has solvable word problem if and only if $W$ is computable.
\end{enumerate}
See Remark~\ref{rem:Listrelators} for an explanation of statement~(1), and see \cite{MillerSurvey} for a wealth of background on solvability of the word problem and other decision problems for groups. From this point of view, it seems very reasonable to hope for a version of the Higman embedding theorem for groups with solvable word problem.

But what class of groups should replace finitely presented groups in the statement of the theorem? Boone and Higman found the following proposition illuminating.

\begin{proposition}\label{prop:ThompsonsStatement}
Every finitely presented simple group has solvable word problem.
\end{proposition}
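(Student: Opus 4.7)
The plan is to describe a decision algorithm for the word problem in a finitely presented simple group $G = \langle s_1,\ldots,s_n \mid r_1,\ldots,r_m\rangle$ by running two semi-algorithms in parallel: one that halts exactly when the input word equals $1$ in $G$, and one that halts exactly when it does not. The first is available for essentially formal reasons; the role of simplicity is to supply the second.

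For the first, as noted in Remark~\ref{rem:Listrelators}, the set of words representing the identity in a computably (in particular, finitely) presented group is computably enumerable, obtained by enumerating all consequences of the relators. Call the resulting procedure $\mathcal{A}_1$: on input a word $w$ over $\{s_1,\ldots,s_n\}$, it halts if and only if $w = 1$ in $G$.

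For the second, given such a word $w$, consider the quotient
\[
G_w := \langle s_1,\ldots,s_n \mid r_1,\ldots,r_m, w\rangle,
\]
which is $G$ modulo the normal closure of $w$. Since $G$ is simple, $G_w$ is either isomorphic to $G$ (when $w = 1$ in $G$) or trivial (when $w \neq 1$ in $G$, as then the normal closure of $w$ is all of $G$). Assuming $G$ is nontrivial, it follows that $w \neq 1$ in $G$ if and only if every generator $s_i$ equals $1$ in $G_w$. The finitely presented group $G_w$ again has a computably enumerable set of identity-words, so I would let $\mathcal{A}_2$ enumerate these words and halt once each of $s_1, \ldots, s_n$ has appeared; this procedure halts on input $w$ precisely when $w \neq 1$ in $G$.

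Running $\mathcal{A}_1$ and $\mathcal{A}_2$ in parallel and returning the verdict of whichever halts first then decides the word problem. (If $G$ is trivial the word problem is trivial anyway, so we may harmlessly assume $G$ is nontrivial, and then by the dichotomy exactly one of the two procedures halts.) The only nonformal step is the use of simplicity in the correctness of $\mathcal{A}_2$, which is the conceptual crux of the proof; everything else is bookkeeping with enumerations.
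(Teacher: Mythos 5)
Your proposal is correct and follows essentially the same strategy as the paper's proof: run the standard enumeration of identity-words in parallel with a second procedure that, using simplicity, detects $w\neq 1$ by verifying that all generators die in the quotient obtained by adjoining the relator $w$. The only (harmless) difference is your explicit handling of the trivial-group case, which the paper leaves implicit.
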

\begin{proof}
Let $G=\langle s_1,\ldots,s_m\mid r_1,\ldots,r_n\rangle$ be a finitely presented simple group.  Given a word $w$ over $s_1,\ldots,s_m$, we will run two algorithms $A_1$ and $A_2$ in parallel, the first searching for a proof that $w=1$, and the second searching for a proof that~$w\ne 1$, terminating as soon as one of the algorithms is successful.

The first algorithm $A_1$ exists for any finitely presented group (or any computably presented group).  All it does is derive all possible consequences of the relators $r_1,\ldots,r_n$, and compares these words one-at-a-time against $w$.  If $w=1$ in the group, then it will eventually appear as a consequence of the relators and the algorithm will terminate. If $w\ne 1$, then $A_1$ never terminates.

The second algorithm $A_2$ only works for simple groups.  The idea is that if $w\ne 1$, then adding the relator $w=1$ to the presentation for $G$ must yield a trivial quotient group.  So all we do is start with the relators $r_1,\ldots,r_n$ together with the relator $w=1$ and try to prove that $s_i=1$ for all~$i$.  Again, if $w\ne 1$, then this algorithm $A_2$ eventually finds such a proof, but if $w=1$ then $A_2$ never terminates.  Together, $A_1$ and $A_2$ constitute a solution to the word problem in~$G$.
\end{proof}

Boone and Higman first became aware of Proposition~\ref{prop:ThompsonsStatement} when it was presented by Richard~J.\ Thompson during a 1969 conference\footnote{The ``Decision Problems in Group Theory'' CODEP conference at the University of Irvine, California in September 1969. See~\cite{BCL} for the proceedings.} talk on his work with Ralph~McKenzie~\cite{McTh}.  It was later discovered that the same observation had been made earlier by Kuznetsov in a short 1958 article published in Russia~\cite{Kuz}.  It seems likely that Boone and Higman formulated their conjecture during that conference, however the first appearance of the conjecture in print appears to be in the proceedings of a 1973 conference ~\cite{Boone}, and then shortly after in a 1974 paper~\cite{BoHi}.

Boone and Higman recognized Thompson's argument as a special case of a principle in logic that a complete deductive system with finitely many axioms is decidable.  That is, there is an algorithm to determine whether a given statement in such a system follows from the axioms.  The algorithm works the same way as above: on the one hand, we search for a proof of the given statement from the axioms, while on the other hand we assume the given statement and search for a contradiction.  The idea here is that a group presentation is analogous to an axiomatic system, with the relators being axioms.  From this point of view:
\begin{itemize}
    \item A presentation of the trivial group is analogous to an inconsistent system, since every relation holds in the trivial group, and every statement is provable in an inconsistent system.\smallskip
    \item A simple group is analogous to a complete system, since adding new relations to a simple group makes it trivial, while adding a new axiom to a complete system makes it inconsistent.\smallskip
    \item A group with solvable word problem is analogous to a decidable system, since in both cases there is an algorithm to determine whether a given statement holds.
\end{itemize} 
Based on these considerations, the following conjecture of Boone and Higman seems very natural.

\begin{BooneHigmanConjecture}
A finitely generated group $G$ has solvable word problem if and only if it embeds into some finitely presented simple group.
\end{BooneHigmanConjecture}

Note that if $G$ has solvable word problem and $H$ is a finitely generated subgroup of~$G$, then $H$ must have solvable word problem.  Thus, it follows from Proposition~\ref{prop:ThompsonsStatement} that every finitely generated group that embeds into a finitely presented simple group has solvable word problem.

\subsection{Boone--Higman for countable groups}

As with the Higman embedding theorem, the Boone--Higman conjecture would imply a more general embedding statement about countable groups.  In particular, we say that a countable group $G$ has \newword{solvable word problem} with respect to an infinite, enumerated generating set $s_1,s_2,\ldots$ if there exists an algorithm to determine whether a given word in these generators represents the identity in~$G$.  Unlike with finitely generated groups, this property may depend on the generating set.

We begin by stating a generalization of Proposition~\ref{prop:ThompsonsStatement} to this setting.

\begin{proposition}\label{prop:ModifyThompsonProof}
If $G$ is a countable simple group with a computable presentation, then $G$ has solvable word problem with respect to the corresponding enumerated generating set. 
\end{proposition}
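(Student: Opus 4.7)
The plan is to carry out the proof of Proposition~\ref{prop:ThompsonsStatement} in the possibly infinitely generated setting. On input~$w$, I would run two semi-decision procedures in parallel, halting as soon as either one succeeds.

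The first algorithm $A_1$ is identical to its finitely presented counterpart. By Remark~\ref{rem:Listrelators}, the set of words in $s_1,s_2,\ldots$ representing the identity of~$G$ is computably enumerable, so I enumerate this set until $w$ appears (if ever), at which point I output ``$w=1$''. This terminates precisely when $w=1$ in~$G$.

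The second algorithm $A_2$ uses simplicity exactly as in Proposition~\ref{prop:ThompsonsStatement}. If $w\neq 1$ in~$G$, then $\nc{w}_G=G$, and so the augmented presentation $\langle s_1,s_2,\ldots\mid r_1,r_2,\ldots,w\rangle$ defines the trivial group; in particular every generator $s_j$ admits a proof of triviality in the augmented presentation. In the finitely presented case we sought such proofs for each of the finitely many generators and halted once all were found; here I would dovetail the search over all $j\in\N$, enumerating proofs that each $s_j$ is trivial in the augmented presentation.

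The main obstacle, which I expect to be the heart of the argument, is supplying $A_2$ with a genuine finite halting condition. In the finitely presented case, finitely many generator-triviality proofs sufficed, but in our setting the condition ``every $s_j$ is provably trivial in the augmented presentation'' is a priori a $\Pi_2$ statement rather than a computably enumerable one, so a naive infinite dovetail never terminates. The crux is to exploit the simplicity dichotomy $G/\nc{w}_G\in\{G,\{1\}\}$ together with the explicit enumeration of the generators to convert this into a computably enumerable condition: for example, one may search in parallel, over all $j$, for some $s_j$ that is trivial in the augmented presentation but fails to be trivial in the original presentation of~$G$, with the failure detected by a careful bootstrapping use of $A_1$. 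Since $G$ is nontrivial, such a $j$ exists whenever $w\neq 1$, and arranging this so that ``$s_j=1$ in the augmented presentation but $s_j\neq 1$ in~$G$'' becomes a computably enumerable condition on the pair $(w,j)$ is the technical step that makes $A_2$ halt in finite time exactly when $w\neq 1$.
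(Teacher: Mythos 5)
Your $A_1$ is fine, and you have correctly isolated the obstacle: with infinitely many generators, ``every $s_j$ is provably trivial in the augmented presentation'' cannot be confirmed in finite time. But the route you sketch to resolve it does not work. You propose to search for some $j$ with $s_j=1$ in the augmented presentation but $s_j\neq 1$ in $G$, with the second condition ``detected by a careful bootstrapping use of $A_1$''. The algorithm $A_1$ only semi-decides triviality: it halts when a word represents the identity and runs forever otherwise, so it can never certify that $s_j\neq 1$ in $G$. Certifying nontriviality in $G$ is precisely the half of the word problem that $A_2$ is supposed to supply, so the proposal is circular; and a priori the set of words nontrivial in $G$ is only co-computably-enumerable, so there is no reason the condition on the pair $(w,j)$ should be computably enumerable before the proposition has been proved.

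The missing idea, and the one the paper uses, is that the algorithm need not be obtained uniformly from the presentation. Since $G$ is simple it is nontrivial, so there exists some fixed word $v$ in the generators with $v\neq 1$ in $G$. Hard-code this single word into $A_2$, and let $A_2$ simply search for a derivation of $v=1$ from the relators $r_1,r_2,\ldots$ together with the extra relator $w$. If $w\neq 1$, then $\nc{w}=G$ by simplicity, the augmented presentation defines the trivial group, and the search succeeds; if $w=1$, the augmented presentation still defines $G$, in which $v\neq 1$, so the search runs forever. No procedure for producing $v$ from the presentation is required: the mere existence of such a $v$ guarantees the existence of the desired algorithm, which is all the proposition asserts. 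This non-uniform step is exactly what replaces the ``technical step'' you left open; without it your $A_2$ has no valid halting criterion.
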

\begin{proof}Since $G$ might have infinitely many generators $s_1,s_2,\ldots$, the proof of Proposition~\ref{prop:ThompsonsStatement} does not go through as stated, since we cannot check algorithmically that $w=1$ implies that $s_i=1$ for all~$i$.  However, it suffices to fix any one word $v$ that represents a nontrivial element of~$G$, and then replace $A_2$ with an algorithm that checks whether adding the relation $w=1$ implies that $v=1$.  Note that we do not need any algorithm or procedure to somehow find such a word~$v$ from the given computable presentation---it suffices to know that there exists such a~$v$, and hence the desired algorithm~$A_2$ exists as well.
\end{proof}

For the following proposition, we say that a countable group has \newword{a solvable word problem} if it has solvable word problem with respect to some finite or infinite generating set.  Note that such a group is necessarily computably presented.

\begin{proposition}
Every countable group with a solvable word problem embeds into a finitely presented group with solvable word problem.  Furthermore, the following groups can be embedded into finitely presented groups with solvable word problem:
\begin{enumerate}
    \item Countable abelian groups.\smallskip
    \item The group $\mathrm{GL}_n(\Q)$ for all $n\geq 2$.
\end{enumerate}
\end{proposition}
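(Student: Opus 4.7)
The plan is to extend Clapham's theorem (cited in the introduction) from the finitely generated to the countable setting, mirroring how Corollary~\ref{cor:InfinityGeneratedHigman} extends the Higman embedding theorem.

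For the main assertion, let $G$ be a countable group with a solvable word problem relative to some enumerated generating set. I would first apply the Higman--Neumann--Neumann construction from~\cite{HNN} to embed $G$ into a $2$-generator group $G^*$, just as in the proof of Corollary~\ref{cor:InfinityGeneratedHigman}. The new ingredient, beyond what is needed in that corollary, is that this construction preserves solvability of the word problem: it is built from free products with $F_2$ and HNN extensions whose associated subgroups have decidable membership once the word problem of $G$ is solvable, so Britton's lemma yields a decision procedure for the word problem of $G^*$ on its two generators. Then $G^*$ is a finitely generated group with solvable word problem, and Clapham's theorem embeds it, and hence $G$, into a finitely presented group with solvable word problem.

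For item~(1), I would follow the proof of Corollary~\ref{cor:HigmanAbelian}: every countable abelian group embeds into $G_0 = \bigl(\bigoplus_\omega \Q\bigr) \oplus \bigl(\bigoplus_\omega \Q/\Z\bigr)$. Equipping $G_0$ with the obvious enumerated generating set (say $\tfrac{1}{n!}$ in each $\Q$--summand and $\tfrac{1}{n!}+\Z$ in each $\Q/\Z$--summand, arranged in a standard computable order), a word is a finite $\Z$-linear combination that one can evaluate coordinatewise; in each coordinate the result is a rational number or a rational modulo $\Z$, and equality with zero is decidable. So $G_0$ has a solvable word problem, and the main assertion embeds $G_0$---and hence every countable abelian group---into a finitely presented group with solvable word problem. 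For item~(2), $\mathrm{GL}_n(\Q)$ is countable, and a solvable word problem with respect to a natural enumerated generating set (e.g.\ the elementary transvections $I+qE_{ij}$ and the rational diagonal matrices, listed in a computable way) is immediate: any word evaluates to a matrix with computable rational entries that can be compared with the identity. The main assertion then applies.

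The main technical obstacle is the verification in the first step that the HNN--style $2$-generator embedding really does preserve solvability of the word problem. The remaining steps are straightforward from previously cited results together with elementary computability facts about rational arithmetic.
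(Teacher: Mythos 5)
Your proposal is correct and follows essentially the same route as the paper: the Higman--Neumann--Neumann two-generator embedding (which preserves solvability of the word problem), followed by Clapham's theorem, with items (1) and (2) reduced to the main statement via the divisible group $\bigl(\bigoplus_\omega \Q\bigr)\oplus\bigl(\bigoplus_\omega \Q/\Z\bigr)$ and direct matrix computation for $\mathrm{GL}_n(\Q)$. The only difference is that you sketch the Britton's-lemma verification that the two-generator embedding preserves word-problem solvability, which the paper simply asserts as part of the cited construction.
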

\begin{proof}
As in the proof of Corollary~\ref{cor:InfinityGeneratedHigman}, the Higman--Neumann--Neuman construction in \cite{HNN} embeds every countable group with a solvable word problem into a two-generated group with solvable word problem.  Clapham \cite{Clapham} proved that every finitely generated group with solvable word problem embeds into a finitely presented group with solvable word problem, and the main statement follows. Since $\bigl(\bigoplus_\omega \Q\bigr)\oplus \bigl(\bigoplus_\omega \Q/\Z\bigr)$ has solvable word problem, statement (1) follows (as in the proof of Corollary~\ref{cor:HigmanAbelian}). For statement (2), it it easy to enumerate a generating set for $\mathrm{GL}_n(\Q)$, and the word problem is solvable by direct matrix computations.
\end{proof}

\begin{remark}Not every countable abelian group has a solvable word problem.  For example, if $S$ is a set of prime numbers that is not computably enumerable, then $G = \bigoplus_{s\in S}\Z/s\Z$ has unsolvable word problem with respect to any enumerated generating set, since otherwise we could enumerate $S$ by listing all primes that divide the orders of the elements of~$G$.
\end{remark}

\begin{corollary}
If the Boone--Higman conjecture holds, then, every countable group with a solvable word problem embeds into a finitely presented simple group.  Furthermore, every countable abelian group embeds into a finitely presented simple group, as does\/ $\mathrm{GL}_n(\Q)$ for all $n\geq 2$.\hfill\qedsymbol
\end{corollary}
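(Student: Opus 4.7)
The plan is to chain the preceding proposition together with the (conjectured) Boone--Higman statement via transitivity of the subgroup relation. First I would take any countable group $G$ with a solvable word problem and invoke the previous proposition to obtain an embedding $G \hookrightarrow H$, where $H$ is a finitely presented group with solvable word problem. In particular $H$ is finitely generated with solvable word problem, so it satisfies the hypothesis of the Boone--Higman conjecture. Assuming the conjecture, there is an embedding $H \hookrightarrow S$ into a finitely presented simple group $S$, and composing the two embeddings yields $G \hookrightarrow S$.

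The two additional cases require no new ideas. The preceding proposition already supplies the key intermediate embeddings: in statement (1) every countable abelian group embeds into some finitely presented group with solvable word problem, and in statement (2) the group $\mathrm{GL}_n(\Q)$ embeds into such a group as well. Composing each of these embeddings with the embedding into a finitely presented simple group that is guaranteed by the Boone--Higman conjecture gives the desired conclusion in each case.

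There is essentially no obstacle to overcome here, since all of the real content sits in the preceding proposition (where Clapham's theorem and the Higman--Neumann--Neumann construction are used to produce finitely presented enlargements with solvable word problem) and in the hypothesis that the Boone--Higman conjecture holds. The corollary simply observes that ``being a subgroup of a finitely presented simple group'' is inherited upward through any embedding into an intermediate finitely presented group with solvable word problem. The only thing worth pausing on is that the Boone--Higman conjecture is stated for finitely generated groups, so one should note explicitly that in each application the intermediate group is finitely generated; this is automatic since finitely presented groups are finitely generated.
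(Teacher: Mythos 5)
Your proposal is correct and is exactly the argument the paper intends: the corollary is stated with no written proof precisely because it follows immediately by composing the embeddings from the preceding proposition (into a finitely presented group with solvable word problem, hence finitely generated with solvable word problem) with the embedding into a finitely presented simple group supplied by the conjecture. Your closing remark that the intermediate group is finitely generated, so the conjecture applies, is the only point needing mention, and you handle it correctly.
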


As mentioned above, it is now known that every countable abelian group embeds into a finitely presented simple group (see Section~\ref{sec:VA}), but it remains an open question whether $\mathrm{GL}_n(\Q)$ embeds into a finitely presented simple group.

\subsection{Partial results}
Though Boone and Higman were not able to prove their conjecture, they did prove a partial result.

\begin{theorem}[Boone--Higman 1974 \cite{BoHi}]\label{thm:BooneHigman}A finitely generated group has solvable word problem if and only if embeds into a computably presented simple group.
\end{theorem}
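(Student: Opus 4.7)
For the forward direction, my plan is to invoke Proposition~\ref{prop:ModifyThompsonProof} directly: if $G$ embeds into a computably presented simple group $S$, then $S$ itself has solvable word problem with respect to its enumerated generating set, and since $G$ is finitely generated each of its generators can be fixed once and for all as a word in the generators of $S$. Any word over $G$'s generators then rewrites into a word over $S$'s generators, so triviality is decided by appealing to $S$'s word problem.

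For the reverse direction, suppose $G$ is finitely generated with solvable word problem. I would build an ascending chain
\[
G = G_0 \leq G_1 \leq G_2 \leq \cdots
\]
of countable groups whose union $S$ is simple and computably presented. Fix a countable pool of fresh stable letters $t_1, t_2, \ldots$ and enumerate all pairs $(u_n,v_n)$ of words over the growing alphabet. At stage $n$, given $G_{n-1}$ with solvable word problem, use that word problem to test whether $u_n$ and $v_n$ represent nontrivial elements of $G_{n-1}$; if so, form an HNN extension
\[
G_n = \langle G_{n-1},\, t_n \mid t_n^{-1} u_n t_n = v_n \rangle
\]
with appropriately chosen associated subgroups, and otherwise set $G_n = G_{n-1}$. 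By Britton's lemma, each $G_{n-1}$ embeds into $G_n$, so nontriviality propagates up the chain and every pair of nontrivial elements that ever arises is eventually conjugated together (with some extra footwork when the two elements have differing orders), forcing the union $S$ to be simple.

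The limit $S$ should then be computably presented: the generating set $\{s_1,\ldots,s_k,t_1,t_2,\ldots\}$ is effectively enumerable, and the relators of $S$ consist of the relators of $G$ (computably enumerable since $G$ has solvable word problem, cf.~Remark~\ref{rem:Listrelators}) together with the HNN conjugation relations added stage by stage, which the construction itself enumerates. By Proposition~\ref{prop:ModifyThompsonProof}, $S$ automatically also has solvable word problem, which I treat as a consistency check rather than an essential feature.

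The hard part will be verifying that solvability of the word problem is preserved from $G_{n-1}$ to $G_n$ uniformly in $n$. Britton's lemma reduces the word problem of an HNN extension to the word problem of the base together with the generalized word problem for the two associated subgroups, so at each stage these subgroups must be chosen carefully (for example as cyclic subgroups generated by $u_n$ and $v_n$ of matching orders) to keep membership decidable. Compounding this, conjugate elements share an order, so elements of distinct orders cannot be made conjugate directly; one must instead drive both into a common normal closure via an auxiliary sequence of HNN moves. Orchestrating this bookkeeping so that the resulting countable presentation is assembled algorithmically from the word problem algorithm for $G$ is where the genuine content of the theorem lies.
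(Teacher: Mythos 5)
Your forward direction matches the paper's. In the reverse direction your overall skeleton (an ascending chain of HNN extensions over an enumeration of pairs of nontrivial words, with the union being the desired computably presented simple group) is the right shape, but the proposal stops exactly at the point where the actual content of Boone and Higman's argument lives, and you say so yourself: you never resolve how to handle pairs $u_n,v_n$ of differing orders, nor how to choose associated subgroups so that Britton's lemma keeps the word problem (and membership in the associated subgroups) decidable. Trying to make all nontrivial elements \emph{conjugate} is the wrong target, and the ``auxiliary sequence of HNN moves'' you gesture at is precisely the missing idea.

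The trick in the paper sidesteps the order problem in one stroke. Given words $v,w$ for nontrivial elements of $G$, do not conjugate $v$ to $w$; instead adjoin a free generator $x$ and a stable letter $t$ with the single relation $(vv^x)^t = v^xw$, where $a^b = b^{-1}ab$. In the free product $G*\langle x\rangle$ the elements $vv^x$ and $v^xw$ automatically have infinite order, whatever the orders of $v$ and $w$ are, so this is a genuine HNN extension of $G*\langle x\rangle$ with infinite cyclic associated subgroups (whose membership problems are decidable from the word problem of $G$ via free-product normal forms), and $G$ embeds in the result. The point is not conjugacy but normal closure: since $w=(v^x)^{-1}(vv^x)^t$, the element $w$ lies in the normal closure of $v$. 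Doing this simultaneously for all enumerated pairs $(v_i,w_i)$ of nontrivial words, with one free letter $x$ and stable letters $t_1,t_2,\ldots$, yields $G^*$ containing $G$ in which the normal closure of every nontrivial element of $G$ contains all of $G$, and $G^*$ again has solvable word problem; iterating $G\leq G^*\leq G^{**}\leq\cdots$ and taking the union gives a computably presented simple group containing $G$. Without this device (or an equivalent one), your stage-by-stage construction as written cannot be completed, so the proposal has a genuine gap rather than being a different proof.
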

\begin{proof}[Sketch of Proof]
By Proposition~\ref{prop:ModifyThompsonProof}, any computably presented simple group has solvable word problem, so any finitely generated subgroup of such a group has solvable word problem as well.

For the converse, observe that a group is simple if and only if the normal closure of each nontrivial element is the whole group. The proof involves a trick for inductively building groups that satisfy this condition. 

Suppose $G$ is a group and $v$ and $w$ are words for nontrivial elements of~$G$, and consider the group
\[
G' = \bigl\langle G,x,t \;\bigr|\; (vv^x)^t = v^xw \bigr\rangle
\]
where $a^b$ denotes $b^{-1}ab$.  Note that $vv^x$ and $v^xw$ are infinite-order elements of the free product $G*\langle x\rangle$, so $G'$ is an HNN extension of $G*\langle x\rangle$, where $t$ conjugates the infinite cyclic group $\langle vv^x\rangle$ to the infinite cyclic group $\langle v^xw\rangle$.  It follows that the natural homomorphism $G\to G'$ is an embedding.  However, since $w=(v^x)^{-1}(vv^x)^t$, the group $G'$ has the property that $w$ lies in the normal closure of~$v$.

Now, if $G$ is any countable group with a solvable word problem (with respect to some generating set),  then there exists an algorithm than enumerates all pairs $(v_i,w_i)$ of words for nontrivial elements of~$G$. Let
\[
G^* = \bigl\langle G,x,t_1,t_2,\ldots \;\bigr|\; (v_iv_i^x)^{t_i}=v_i^xw_i\text{ for all }i \bigr\rangle.
\]
As in the previous paragraph, $G$ embeds into $G^*$, and by construction the normal closure in $G^*$ of any nontrivial element of $G$ contains all of~$G$. Furthermore, it is not hard to show that $G^*$ again has solvable word problem.  Then the union of the sequence
\[
G \leq G^* \leq G^{**} \leq G^{***}\leq \cdots
\]
is a computably presented simple group that contains~$G$.
\end{proof}

Of course, the simple group constructed in the above proof is very far from being finitely presented. However, applying the Higman embedding theorem gives us the following corollary.

\begin{corollary}\label{cor:SimpleSubgroup}
A finitely generated group has solvable word problem if and only if it embeds into a simple subgroup of a finitely presented group.
\end{corollary}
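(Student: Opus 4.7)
The plan is to prove the corollary by combining Theorem~\ref{thm:BooneHigman} with the Higman Embedding Theorem for the forward direction, and adapting the two-algorithm idea from Proposition~\ref{prop:ThompsonsStatement} for the converse.

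For the forward direction, I would start with a finitely generated group $G$ having solvable word problem. Theorem~\ref{thm:BooneHigman} supplies an embedding $G \hookrightarrow S$ with $S$ a computably presented simple group. Then Corollary~\ref{cor:InfinityGeneratedHigman} (the countable version of the Higman embedding theorem) embeds $S$ into a finitely presented group $H$. The composition exhibits $G$ as a subgroup of the simple subgroup $S$ of the finitely presented group $H$, as required. This half is essentially a one-line application of earlier results.

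For the converse, assume $G \leq S \leq H$ with $G$ finitely generated, $S$ simple, and $H$ finitely presented. If $G$ is trivial the word problem is trivial, so fix once and for all a word $v$ for a nontrivial element of $G$. For any word $w$ in the generators of $G$, I would run two algorithms in parallel: $A_1$ enumerates consequences of the finite presentation of $H$, halting if it finds $w = 1$ in $H$; $A_2$ enumerates all finite tuples of words $u_1,\ldots,u_k$ in the generators of $H$ and signs $\epsilon_i \in \{\pm 1\}$, together with candidate proofs (in $H$) that $v = \prod_i u_i^{-1} w^{\epsilon_i} u_i$, halting upon finding such a verified identity. Since $G\leq H$, membership $w=1$ in $G$ is the same as in $H$, so $A_1$ halts exactly when $w=1$ in $G$. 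For $A_2$: if $w \neq 1$ in $G$, then $w\neq 1$ in $S$, and simplicity of $S$ puts $v$ in the normal closure of $w$ in $S$, yielding elements $x_i \in S \subseteq H$ that witness the identity and hence cause $A_2$ to terminate; conversely, if $w=1$ in $H$ then any such product is trivial in $H$, forcing $v=1$ in $H$ and thus in $G$, contradicting the choice of $v$, so $A_2$ cannot terminate when $w=1$ in $G$. Exactly one of $A_1,A_2$ halts, solving the word problem.

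The main conceptual point — and the only step that requires care — is the $A_2$ algorithm: one must notice that simplicity of the intermediate subgroup $S$ (rather than of $H$ itself) suffices, because the normal-closure equation witnessed in $S$ is automatically an equation in the ambient finitely presented $H$, whose word identities form a computably enumerable set. There is no computational obstacle in that $H$ need not have solvable word problem; computable enumerability of equalities in $H$ is all that is used, exactly paralleling the role of the first algorithm in Proposition~\ref{prop:ThompsonsStatement}.
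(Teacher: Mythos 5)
Your proposal is correct and follows essentially the same route as the paper: the forward direction is the same composition of Theorem~\ref{thm:BooneHigman} with Corollary~\ref{cor:InfinityGeneratedHigman}, and your converse is the paper's two-algorithm argument, since searching for an explicit expression of $v$ as a product of conjugates of $w^{\pm 1}$ verified in $H$ is just an unpacked form of the paper's $A_2$, which searches for a proof that $v=1$ from the relations of $H$ together with $w=1$. The only cosmetic difference is that you take $v$ nontrivial in $G$ (treating trivial $G$ separately) where the paper takes $v$ nontrivial in $S$, which changes nothing.
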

\begin{proof}
If a finitely generated group has solvable word problem, then it embeds into a computably presented simple group by Theorem~\ref{thm:BooneHigman}, and by Corollary~\ref{cor:InfinityGeneratedHigman} this embeds into a finitely presented group.

The converse involves a modification of the proof of Proposition~\ref{prop:ModifyThompsonProof}.  Suppose that $G\leq S\leq H$, where $G$ is finitely generated, $S$ is simple, and $H$ is finitely presented, and without loss of generality suppose that each generator for $G$ is also a generator for~$H$.  By Proposition~\ref{prop:HigmanFirstHalf}, the group $G$ is computably presented, so all we need is an algorithm $A_2$ that recognizes when a given word $w$ in $G$ represents a non-identity element.  Note that if $w$ is such a word, then adding the relation $w=1$ to $H$ gives a proper quotient of $H$ in which $S$ collapses to the trivial group.  Thus, it suffices to fix any word $v$ in $H$ that represents a nontrivial element of~$S$, and have $A_2$ be the algorithm that searches for a proof that $v=1$ using the relations in~$H$ together with the relation $w=1$.
 \end{proof}

Thompson was later able to improve Boone and Higman's Theorem \ref{thm:BooneHigman}. 
\begin{theorem}[Thompson 1980 \cite{Tho}]\label{thm:Tho}A finitely generated group has solvable word problem if and only if it embeds into a finitely generated, computably presented simple group.\hfill\qedsymbol
\end{theorem}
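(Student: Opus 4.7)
The forward direction is already essentially covered by Proposition~\ref{prop:ModifyThompsonProof}: a finitely generated subgroup of a finitely generated, computably presented simple group inherits the solvable word problem from the ambient group. So I would simply cite that.

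For the converse, my starting point would be Theorem~\ref{thm:BooneHigman}: given $G$ finitely generated with solvable word problem, we already have an embedding of $G$ into a computably presented simple group $G^{\infty}$ built as the direct limit of the sequence $G\le G^{*}\le G^{**}\le\cdots$. The issue is that $G^{\infty}$ is typically not finitely generated, since each stage $H\mapsto H^{*}$ introduces infinitely many new stable letters $t_1,t_2,\ldots$, one for each pair $(v_i,w_i)$ of words for nontrivial elements. So the reduction I would aim for is: replace this ``one-stable-letter-per-pair'' HNN step with a single finitely generated HNN extension realising all of the conjugations $(v_iv_i^x)^{t_i}=v_i^xw_i$ simultaneously.

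The concrete plan is: at each stage, instead of adjoining $x$ and the family $\{t_i\}$, I would adjoin $x$, one ``addressing'' element $s$, and one stable letter $t$, then perform the single HNN extension with associated subgroups
\[
A=\bigl\langle s^{i}(v_iv_i^x)s^{-i}\,:\,i\in\N\bigr\rangle,\qquad B=\bigl\langle s^{i}(v_i^xw_i)s^{-i}\,:\,i\in\N\bigr\rangle,
\]
and $t$ realising the obvious bijection between the listed generators. The role of $s$ is to spread the relevant elements apart so that $A$ and $B$ are free on the displayed generating sets; the conjugate $t^{-1}\bigl(s^{i}(v_iv_i^x)s^{-i}\bigr)t=s^{i}(v_i^xw_i)s^{-i}$ then recovers $w_i\in\nc{v_i}$ for every $i$, exactly as in the original Boone--Higman trick. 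If this HNN extension is faithful, Britton's lemma also gives the effective normal form needed to propagate the solvable word problem from $H$ to the new group. Iterating and taking the direct limit then gives a group $G^{\sharp}$ generated by the original generators of $G$ together with the three letters $x,s,t$ from the first stage (plus a uniformly bounded amount added at each subsequent stage, which I would also aim to absorb so the final generating set is finite), that is computably presented, contains $G$, and in which every nontrivial element normally generates the whole group.

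The main obstacle is the freeness condition on $A$ and $B$. Choosing the right $s$ so that the infinite families $\{s^{i}(v_iv_i^x)s^{-i}\}$ and $\{s^{i}(v_i^xw_i)s^{-i}\}$ are genuinely free bases of free subgroups of the finitely generated free product $H*\langle x,s\rangle$ is the delicate step; without it, the HNN extension may collapse part of $H$ and destroy the embedding. One natural strategy is to take $s$ to be a sufficiently ``generic'' element of the free product (for instance an element of a free subgroup disjoint from the supports of the $v_i,w_i$ in an appropriate sense), and then use a small-cancellation or ping-pong argument to certify freeness. A secondary obstacle is book-keeping across the iteration so that the generating set does not grow with the stage; I would handle this by absorbing the new $x,s,t$ of each stage into the original triple via further HNN-type identifications, arranged so as not to interfere with simplicity or decidability. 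Once freeness and this book-keeping are in hand, the solvable-word-problem bookkeeping from the proof of Theorem~\ref{thm:BooneHigman} carries over essentially verbatim.
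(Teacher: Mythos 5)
Your forward direction is fine: it is exactly the Kuznetsov-style argument of Proposition~\ref{prop:ModifyThompsonProof} applied to a finitely generated subgroup. The converse, however, has a genuine gap, and it is not where you think it is. The freeness you flag as the ``main obstacle'' is actually the routine part: the elements $s^{i}(v_iv_i^x)s^{-i}$ are conjugates of infinite-order elements of $H*\langle x\rangle$ by distinct powers of the free factor $\langle s\rangle$, so the free product normal form in $(H*\langle x\rangle)*\langle s\rangle$ already shows they freely generate a free subgroup --- no genericity or small cancellation is needed (what does need an argument at that step is decidability of membership in $A$ and $B$, which Britton's lemma requires if the word problem is to stay solvable). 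The real gap is finite generation of the limit. Each stage of your iteration adjoins a fresh triple $x,s,t$, so the union of the chain is generated by the generators of $G$ together with infinitely many new letters; ``a uniformly bounded amount added at each subsequent stage'' over infinitely many stages is not a finite generating set. Your proposed remedy --- absorbing the new letters of each stage into the first triple ``via further HNN-type identifications'' --- is precisely the hard content of Thompson's theorem and is not supplied: any HNN-type identification introduces new stable letters, so as stated the idea is circular, and you would additionally have to check that the absorption keeps $G$ embedded, keeps the word problem solvable, and still yields a simple group in the limit. None of this is routine, which is why Thompson's 1980 argument is substantially more elaborate than the Boone--Higman chain it refines.

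For comparison, the paper does not attempt to upgrade the Boone--Higman construction at all: it cites Thompson for the theorem and points to a different, modern proof via twisted Brin--Thompson groups. By Theorem~\ref{thm:BZTheorem}, $G$ embeds into $GV_G$, which is finitely generated and simple, and by Corollary~\ref{cor:BZCor} the group $GV_G$ is computably presented exactly when $G$ has solvable word problem; combined with Proposition~\ref{prop:ModifyThompsonProof} this gives both directions. If you want a complete argument at the level of this survey, that route is the efficient one; if you want to salvage your approach, you must either give Thompson's actual bookkeeping or find a genuinely new mechanism for keeping the generating set finite across the infinitely many stages.
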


As in Corollary~\ref{cor:SimpleSubgroup}, it follows that a finitely generated group has solvable word problem if and only if it embeds into a finitely generated simple subgroup of a finitely presented group. See Corollary~\ref{cor:BZCor} below for an alternative, and more specific, proof of Thompson's result, as well as~\cite[Appendix~A]{DarSte}.

Finally, we should mention that various other decision problems for groups are now known to have associated embedding theorems, including the conjugacy problem, the order problem, the power problem, the subgroup membership problem, and the isomorphism problem \cite{Belegradek,BoHi2,Sac2}.  In addition, analogs of the Boone--Higman theorem have been proven for lattice-ordered groups~\cite{Glass} and left-orderable groups \cite{BludovGlass,DarSte}, as well as certain other algebraic structures such as rings of characteristic~$p$, magmas, loops, and lattices~\cite{EMN}, but the corresponding statement for semigroups appears to be much more difficult~\cite{Maltcev}.

\section{The hunt for ``good'' containers}
A strategy towards proving the Boone--Higman conjecture is to find a nice family of finitely presented simple groups to serve as targets for embeddings of groups with solvable word problem.  In this section we discuss how this has proceeded for various groups of interest. We should mention that there cannot exist a finitely presented simple group that simultaneously contains copies of all finitely presented groups with solvable word problem\footnote{This follows from the Boone--Rogers theorem that there is no single algorithm that takes as input a finite presentation of a group $G$ with solvable word problem and a word $w$ and determines whether $w$ is the identity in $G$~\cite{BoRo}.  If we had a single finitely presented simple group $S$ that contained every finitely presented group with solvable word problem, then we would get such an algorithm by searching for a homomorphism from $G$ to $S$ that maps $w$ to a non-identity element of~$S$ (and simultaneously searching for a proof that $w$ is the identity using the relations of~$G$).}. Thus, generally speaking, the hunt is for families of Boone--Higman containers, rather than a single universal Boone--Higman container.

\subsection{The Thompson groups $\boldsymbol{F}$, $\boldsymbol{T}$, and $\boldsymbol{V}$.}
At the time Boone and Higman made their conjecture, the only known infinite, 
finitely presented simple groups were Thompson's groups $T$ and $V$, and the 
Higman--Thompson groups $V_{d,r}$, which are generalizations of Thompson's group 
$V$ created by Higman \cite{HigmanSimple}.
However, all of the groups $V_{d,r}$ embed into~$V$.

In the 1960's, Richard J.\ Thompson introduced three finitely presented groups $F\leq T \leq V$, now known collectively as \newword{Thompson's groups}.  Each group has a natural action on a compact space, with $F$ acting on a closed interval, $T$~acting on the circle, and $V$ acting on the Cantor set.  At the time of their introduction, $T$ and $V$ were the only known examples of infinite, finitely presented simple groups. Even today, most known infinite, finitely presented simple groups are variations on Thompson's groups, with arguably the only exceptions being the finitely presented simple groups introduced by Marc Burger and Shahar Mozes~\cite{BuMo1,BuMo2} and the Kac-Moody groups used by Pierre-Emmanuel Caprace and Bertrand R\'emy \cite{CR1,CR2}.

Though it will not be relevant to this discussion, Thompson's groups also have many other remarkable algebraic and geometric properties (see \cite{CFP} for a general introduction to these groups). Since $F$ and $T$ both embed into $V$, only the latter group will be interesting for our discussion of the Boone--Higman conjecture.

\newcommand{\cone}[1]{#1\C}

Thompson's group $V$ can be defined as follows.  Let $\C=\{0,1\}^\omega$ be the Cantor set of all infinite binary sequences.  Given a finite binary sequence $\alpha$, the corresponding \newword{cone} $\cone{\alpha}$ is the subset of $\C$ consisting of infinite sequences that start with~$\alpha$. Given two partitions $\cone{\alpha_1},\ldots,\cone{\alpha_n}$ and $\cone{\beta_1},\ldots,\cone{\beta_n}$ of $\C$ into cones, we can define a homeomorphism $f\colon \C\to\C$ that maps each $\cone{\alpha_i}$ to $\cone{\beta_i}$ in the canonical fashion, i.e.~$f(\alpha_i\psi)=\beta_i\psi$ for all infinite binary sequences~$\psi$.  The collection of all homeomorphisms of $\C$ defined in this way is Thompson's group~$V$. 

\begin{theorem}Thompson's group $V$ is simple and finitely presented.
\end{theorem}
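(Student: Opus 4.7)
The plan is to establish simplicity and finite presentability separately, using methods standard in the Thompson-group literature. For simplicity I would use Higman's commutator trick adapted to the action on $\C$. First I would verify that $V$ is perfect, which can be done by writing each of the standard cone-swap generators as a single commutator of elements whose supports are spread across disjoint cones. Then, given a nontrivial normal subgroup $N\trianglelefteq V$, pick some $g\in N\setminus\{1\}$. Since $g$ acts nontrivially on $\C$, there is a finite binary word $\alpha$ whose cone $\alpha\C$ is disjoint from $g(\alpha\C)$. Let $V_\alpha\leq V$ denote the subgroup of elements supported in $\alpha\C$; this subgroup is isomorphic to $V$ itself. For any $h\in V_\alpha$ the conjugate $ghg^{-1}$ is supported in $g(\alpha\C)$, so $h$ and $ghg^{-1}$ commute with each other, which forces $[g,h]$ to have a controlled form. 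A doubling argument applied to pairs of elements in $V_\alpha$ then shows that $[V_\alpha,V_\alpha]\subseteq N$; since $V_\alpha\cong V$ is perfect this gives $V_\alpha\subseteq N$. Finally, using the high transitivity of the $V$-action on finite configurations of cones, I would show that conjugates of $V_\alpha$ generate all of $V$, whence $N=V$.

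For finite presentability I would follow one of two classical routes. The first, in the spirit of Cannon--Floyd--Parry, is to fix four explicit generators (for example two elements generating a copy of $F$, an order-three rotation-type element from $T$, and a transposition of two cones), together with an explicit finite list of relations, and verify via normal forms on tree-pair diagrams that these relations suffice. The second, more conceptual route is to construct the Stein--Farley cube complex, a contractible CW complex on which $V$ acts with finite cell stabilizers and cocompact skeleta, and apply Brown's criterion combined with a Morse-theoretic analysis of descending links to conclude that $V$ is of type $\mathrm{F}_\infty$, and in particular finitely presented.

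The step I expect to be the main obstacle is the propagation stage of the simplicity proof: the commutator trick only directly deposits $[V_\alpha,V_\alpha]$ into $N$, and turning this into $N=V$ requires a delicate interplay between perfectness of the local copy $V_\alpha\cong V$ (which at first glance looks circular with the statement one is trying to prove) and the transitivity of $V$ on partitions of $\C$ into cones. Resolving the apparent circularity, by using only perfectness of $V_\alpha$ at the level of individual generators rather than of the whole group, and by carefully matching cone partitions under the $V$-action, is the technical heart of the argument.
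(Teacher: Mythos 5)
Your proposal is correct, but the simplicity argument follows a genuinely different (equally classical) route from the paper's. You use the Epstein/Higman-style double commutator trick: from $g\in N$ and a cone $\alpha\C$ with $g(\alpha\C)\cap\alpha\C=\emptyset$, the identity $[[g,h_1],h_2]=[h_1^{-1},h_2]$ for $h_1,h_2\in V_\alpha$ deposits $[V_\alpha,V_\alpha]$ into $N$, and you then need two extra ingredients: perfectness of $V$ (proved non-circularly at the level of generators, exactly as you indicate --- the splitting relation $(\alpha\;\beta)=(\alpha0\;\beta0)(\alpha1\;\beta1)$ kills any parity obstruction) and the fact that the normal closure of $V_\alpha$ is all of $V$. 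For the latter, note that conjugates of $V_\alpha$ are precisely the subgroups of elements supported in proper clopen sets, so it suffices that such elements generate $V$; this is where the generation-by-transpositions fact (every transposition other than $(0\;1)$ has proper support, and $(0\;1)$ splits as above) re-enters, and it is the same generation statement the paper itself invokes. The paper's sketch instead runs a shorter, more explicit transposition calculus in the style of Bleak--Quick: the single commutator $(\alpha0\;\alpha1)\,n\,(\alpha0\;\alpha1)\,n^{-1}=(\alpha0\;\alpha1)(\beta0\;\beta1)$ already lies in $N$, one conjugation produces the transposition $(\alpha\;\beta)\in N$, and conjugacy of all transpositions other than $(0\;1)$ finishes the argument with no appeal to perfectness or to a normal-closure propagation step. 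Your approach is more generic (it works nearly verbatim for many full-type groups of homeomorphisms), while the paper's is shorter and fully explicit. For finite presentability your two proposed routes (an explicit finite presentation verified via tree-pair normal forms, or the Stein--Farley complex with Brown's criterion and a descending-link analysis giving type $\mathrm{F}\!_\infty$) are the same two standard strategies the paper cites, differing only in that the paper highlights the Bleak--Quick two-generator, seven-relator presentation rather than the Cannon--Floyd--Parry one; at the level of detail of the paper's own sketch, this is equivalent.
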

\begin{proof}[Sketch of Proof]
As observed by the second author and Martyn Quick in~\cite{BleakQuick}, Thompson's group $V$ is in some sense a natural generalization of the finite symmetric groups, though there is not a distinction in $V$ between even and odd permutations. In particular, if $\cone{\alpha}$ and $\cone{\beta}$ are disjoint cones in~$\C$, define the corresponding \newword{transposition} to be the element $(\alpha\;\beta)$ of $V$ that swaps $\cone{\alpha}$ with $\cone{\beta}$ via canonical homeomorphisms and is the identity elsewhere. It is not difficult to show that all of the transpositions other than $(0\;1)$ are conjugate in $V$, and that this conjugacy class generates~$V$.

To prove that $V$ is simple, let $N$ be a nontrivial normal subgroup of~$V$, and let $n$ be a nontrivial element of~$N$.  Then there exist disjoint cones $\cone{\alpha}$ and $\cone{\beta}$ with $(\alpha,\beta)\ne (0,1)$ so that $n$ maps $\cone{\alpha}$ to $\cone{\beta}$ in the canonical fashion.  Now the commutator \[
(\alpha0\;\,\alpha1)\,n\,(\alpha0\;\,\alpha1)\,n^{-1} = (\alpha 0\;\alpha 1)\,(\beta 0\;\beta 1)
\]
lies in $N$, and conjugating by $(\alpha 1\;\,\beta 0)$ gives us that $(\alpha\;\beta)\in N$.  Conjugating by elements of $V$ catches all transpositions other than $(0\;1)$. Since these generate~$V$, it follows that $N=V$, which proves that $V$ is simple.

Known arguments for the finite presentability of $V$ involve either constructing a complex on which $V$ acts (e.g.~\cite{BrownFiniteness} or \cite{FarleyTV}) or going through nontrivial computations involving relators.  For example, the second author and Quick \cite{BleakQuick} prove that $V$ has an infinite presentation with the transpositions as generators, and relations of the form:
\begin{enumerate}
    \item $(\alpha\;\beta)^2 = 1$.\smallskip
    \item $(\alpha\;\beta) = (\alpha 0\;\,\beta 0)(\alpha 1\;\,\beta 1)$.\smallskip
    \item $(\alpha\;\beta)^{(\gamma\;\delta)} = (\epsilon\;\zeta)$ for certain triples $(\alpha\;\beta)$, $(\gamma\;\delta)$, and $(\epsilon\;\zeta)$. 
\end{enumerate}
They then use explicit computations to reduce this to a finite presentation with two generators and seven relators.
\end{proof}

\begin{remark}
Inspired by Thompson's work, Higman in~\cite{HigmanSimple} considered the groups~$V_d$, which are defined similarly to $V$, but act on the Cantor space $\{0,1,\ldots,d-1\}^\omega$ for some $d\geq 2$. More generally, he also defined the groups $V_{d,r}$ that act on the Cantor space $\{0,1,\ldots,r-1\}\times\{0,1,\ldots,d-1\}^\omega$ for $r\geq 1$ and $d\geq 2$ (he uses the notation $G_{n,r}$ for these).  Higman proved that all of these groups are finitely presented.  Surprisingly, the groups $V_d$ and $V_{d,r}$ are only simple when $d$ is even, though when $d$ is odd they have simple subgroups of index two.  As mentioned above, Higman also proved that all of these groups embed into Thompson's group~$V$, making them largely irrelevant to the Boone--Higman conjecture.
\end{remark}

We should mention that $F$, $T$, $V$, and all the $V_{d,r}$ are not only finitely presented but even have type $\mathrm{F}\!_\infty$ \cite{BrGe,BrownFiniteness}. Recall that a group has \newword{type $\mathrm{F}\!_n$} if it has a classifying space with finite $n$-skeleton, and has \newword{type $\mathrm{F}\!_\infty$} if it has type $\mathrm{F}\!_n$ for all $n$. In particular type $\mathrm{F}\!_1$ is equivalent to finite generation and $\mathrm{F}\!_2$ is equivalent to finite presentability. See, e.g.~\cite[Chapter~7]{Geog} for more background on these and other finiteness properties.

The following theorem lists some other known subgroups of~$V$.

\begin{theorem}\label{thm:GroupsVContains}
The following countable groups embed into Thompson's group~$V$:
\begin{enumerate}
    \item Any finite group,  free abelian group, or nonabelian free group;\smallskip
    \item $\bigoplus_\omega V$, and hence any countable direct sum of subgroups of~$V$;\smallskip
    \item {\normalfont(Higman 1974~\cite[Theorem~6.6]{HigmanSimple})} Any countable group (such as\/ $\Q/\Z$, and the finitely supported infinite symmetric group) that is an ascending union of finite groups;\smallskip
    \item {\normalfont(R\"over 1999~\cite{RoverPhD})} Any group that has a subgroup of finite index that embeds into~$V$; the Houghton groups~$H_n$; any free product of finitely many finite groups;\smallskip
    \item {\normalfont(Bleak--Salazar-D{\'\i}az 2013~\cite{BleakSalazar})} Any restricted wreath product\/ $V\wr (G_1\times H_1)$ or free product\/ $(G_1\times H_1)*(G_2\times H_2)$, where each $G_i$ is finite and each $H_i$ is either a free group or\/~$\Q/\Z$.\hfill\qedsymbol
\end{enumerate}
\end{theorem}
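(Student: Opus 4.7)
The unifying strategy is to exploit the self-similarity of $\C$: for any finite binary word $\alpha$, the map $\alpha\psi\mapsto\psi$ identifies $\cone{\alpha}$ with $\C$ itself, so any $v\in V$ can be ``transferred'' to act on $\cone{\alpha}$ via canonical cone bijections and extended by the identity on the complement. This yields an injection $V\hookrightarrow V$ with image supported in $\cone{\alpha}$, and elements supported on disjoint cones automatically commute. Most of the embeddings below are obtained by realizing a group as a permutation group on a cone partition, by combining copies of $V$ on disjoint cones, or by ping-pong on cone dynamics.

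For item~(1), a finite group $G$ of order $n$ partitions $\C$ into $n$ cones and acts by its left regular representation (Cayley), permuting cones via canonical homeomorphisms. For $\Z^n$, one picks $n$ disjoint cones and on each places a transferred infinite-order element of $V$; disjointness of supports makes them commute, and inspecting dynamics on each cone shows that the group they generate is free abelian of rank $n$. A nonabelian free subgroup arises from standard ping-pong between two $V$-elements with appropriate attracting and repelling cones. For item~(2), take the partition $\C\setminus\{0^\omega\}=\bigsqcup_{n\ge 0}\cone{0^n1}$ and send the $n$-th copy of $V$ to its transfer supported in $\cone{0^n1}$. Any element of $\bigoplus_\omega V$ has only finitely many nonidentity coordinates, so its image is a well-defined element of $V$, and disjoint supports yield injectivity.

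For item~(3), write $G=\bigcup_{n\ge 1}G_n$ with each $G_n$ finite, and build the embedding inductively, starting from the Cayley realization of $G_1$ as a cone-permutation on a partition $\mathcal{P}_1$. Given a realization of $G_n$ on $\mathcal{P}_n$, use the fact that the left regular action of $G_{n+1}$ restricts to $[G_{n+1}:G_n]$ copies of the left regular action of $G_n$; refining each cone of $\mathcal{P}_n$ into $[G_{n+1}:G_n]$ subcones produces $\mathcal{P}_{n+1}$, on which $G_{n+1}$ acts extending the $G_n$-action. The colimit of these actions is the desired embedding of $G$. For item~(4), if $H\le V$ has index $k<\infty$ in $G$, place $k$ disjoint transferred copies of $H$ on $k$ disjoint cones and permute the cones along a transversal; this realizes the Kaloujnine--Krasner embedding $G\hookrightarrow H\wr\mathrm{Sym}(k)$ inside $V$. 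The Houghton group $H_n$ is handled by encoding its $n$ rays as $n$ chains of cones in $n$ disjoint ``ray'' cones; the balance condition on the translation numbers ensures that an eventually-translation element is described by finitely many cone bijections, which is precisely what a $V$-element is. A free product of finitely many finite groups embeds by placing each factor as a Cayley cone-permutation on its own cone and running a ping-pong argument on reduced words.

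The main obstacle is item~(5), due to Bleak--Salazar-D\'iaz. Both $V\wr(G_1\times H_1)$ and $(G_1\times H_1)*(G_2\times H_2)$ require care because the $H_i$ can be infinite (free or $\Q/\Z$), so one cannot assemble the group purely from finitely many disjoint-support pieces. I would first embed each direct factor $G_i\times H_i$ into $V$ by combining a Cayley copy of the finite $G_i$ with an embedding of $H_i$ (by ping-pong if $H_i$ is free, by item~(3) if $H_i\cong\Q/\Z$) on appropriately refined subcones. For the wreath product, use the infinite partition from item~(2) as the wreath index set and implement the $(G_1\times H_1)$-action as cone-permuting $V$-elements that simultaneously transport the base copies of $V$. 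The free product is then tackled by a ping-pong argument on carefully nested cone systems; verifying the ping-pong conditions in the presence of the infinite factors $H_i$ is the technically demanding step, and is where I expect the real work to lie.
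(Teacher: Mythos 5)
The paper itself gives no proof of this theorem: it is a survey compilation, with items (3)--(5) cited to Higman, R\"over, and Bleak--Salazar-D\'iaz respectively, so your attempt can only be measured against those original arguments. Your sketches for items (1), (2), (3), and the first two parts of (4) are essentially the standard constructions and are fine: regular representations on cone partitions, transfers to disjoint cones for direct sums, the coset-refinement induction for ascending unions of finite groups (this is in spirit Higman's Theorem 6.6 argument), the Kaloujnine--Krasner-style embedding $G\hookrightarrow H\wr\mathrm{Sym}(k)$ realized by $k$ transferred copies of $H\leq V$ on disjoint cones permuted canonically, and the encoding of the Houghton rays by the cones $0^k1\C$ inside disjoint cones, where eventual translations become finite prefix replacements.

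There are, however, two genuine gaps. First, your treatment of free products of finitely many finite groups in item (4) does not work as stated: if each finite factor is realized by a cone permutation supported ``on its own cone,'' then the factors have disjoint supports, hence commute, and you generate a direct product rather than a free product; no ping-pong argument can rescue elements that act as the identity on the other factors' ping-pong sets. The ping-pong lemma for free products needs every nontrivial element of $G_i$ to push the sets $X_j$ ($j\neq i$) into $X_i$, which forces the factors to act globally with suitably nested dynamics (e.g.\ regular representations on partitions of all of $\C$ arranged so that the non-identity cones of one factor sit inside the identity cone of the other); alternatively, and more cheaply, note that a free product of finitely many finite groups is virtually free, so it follows from item (1) together with the finite-index part of item (4), which is essentially R\"over's route. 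Second, item (5) is not proved: you outline a plan and explicitly defer the verification of the ping-pong conditions in the presence of the infinite factors $H_i$, and for the wreath product $V\wr(G_1\times H_1)$ the transport of the base copies of $V$ by an infinite acting group (free or $\Q/\Z$) is precisely where naive disjoint-support constructions break down. That verification is the main technical content of the Bleak--Salazar-D\'iaz paper, so as it stands your argument establishes items (1)--(4) (modulo the free-product fix above) but only gestures at item (5).
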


Groups of type (3) above are known as ``countable locally finite groups'', and have attracted much attention in the literature~\cite{KeWe}.

In addition, there are now several known obstructions to embedding into~$V$.

\begin{theorem}[Higman 1974~\protect{\cite{HigmanSimple}}]\label{thm:NoSL3ZInV}
The group\/ $\mathrm{SL}_3(\Z)$ does not embed into~$V$.  Furthermore, every torsion-free nilpotent group that embeds into $V$ is free abelian.  In particular,\/ $\Q$ does not embed into $V$, and neither does any non-cyclic subgroup of~$\Q$ such as\/ $\Z\bigl[\frac12\bigr]$.
\end{theorem}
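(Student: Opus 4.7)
The plan is to reduce both the $\mathrm{SL}_3(\Z)$ statement and the $\Q$ statement to a structural claim about torsion-free subgroups of $V$, and then establish that claim via the dynamics on the Cantor set. For the reductions: $\mathrm{SL}_3(\Z)$ contains the integer Heisenberg group $H_3(\Z)$ of unipotent upper-triangular $3\times 3$ integer matrices, which is torsion-free nilpotent of class two but not abelian, hence not free abelian. Likewise, $\Q$ and every non-cyclic subgroup of $\Q$ such as $\Z[\tfrac12]$ is torsion-free abelian but not free abelian, since any two elements are $\Z$-linearly dependent. So it suffices to prove (i) every torsion-free nilpotent subgroup of $V$ is abelian, and (ii) every torsion-free abelian subgroup of $V$ is free abelian.

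Both (i) and (ii) would follow from the following dynamical lemma: for every infinite-order $g \in V$, the centralizer $C_V(g)$ is virtually $\Z^m$ for some $m \geq 1$. I would prove this by showing that $g$, represented by matching two partitions $\cone{\alpha_1},\dots,\cone{\alpha_n}$ and $\cone{\beta_1},\dots,\cone{\beta_n}$ of $\C$, admits a nonempty finite set $\mathrm{Att}(g) = \{x_1,\dots,x_m\}$ of \emph{attracting fixed points}: infinite sequences $x_i$ with a neighborhood cone $U_i$ such that $g(U_i)\subsetneq U_i$ and $g^k(y) \to x_i$ for every $y \in U_i$. Any $h \in C_V(g)$ permutes $\mathrm{Att}(g)$, so a finite-index subgroup $C_0 \leq C_V(g)$ fixes each $x_i$; commutation with the local contraction $g|_{U_i}$ forces the germ of $h$ at $x_i$ to be a power of the germ of $g$ there, giving an integer $\phi_i(h)$, and assembling these produces a homomorphism $\phi \colon C_0 \to \Z^m$. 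An element in $\ker\phi$ commutes with $g$ and acts trivially on the germ at every attractor; a careful tree-pair argument shows such an element has order bounded in terms of the combinatorial data of $g$, so $\ker\phi$ is finite and $C_V(g)$ is virtually $\Z^m$.

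With the lemma in hand, (i) and (ii) are short. For (i), a torsion-free nilpotent $N \leq V$ has nontrivial center, so choose $1 \ne g \in Z(N)$; since $g$ has infinite order, $N \leq C_V(g)$ is virtually abelian, and any finitely generated torsion-free nilpotent virtually abelian group is abelian (the Hirsch length matches the torsion-free rank of the abelianization only in the abelian case), so every finitely generated subgroup of $N$ is abelian and hence $N$ is abelian. For (ii), a torsion-free abelian $A \leq V$ sits inside some virtually $\Z^m$ centralizer, so $A \cap \Z^m$ has finite index in $A$ and is free abelian of rank at most $m$; multiplying by the index shows $A \subseteq \tfrac{1}{N}\Z^m$ for some $N$, so $A$ itself is free abelian.

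The main obstacle is the centralizer lemma, specifically the combinatorial identification of attracting fixed points from the tree-pair representation and the proof that the kernel of the germ homomorphism $\phi$ is finite. Once that dynamical framework is in place, the structural conclusions (i) and (ii), and hence the stated theorem, follow by short algebraic arguments.
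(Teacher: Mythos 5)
Your reductions are fine: the Heisenberg group inside $\mathrm{SL}_3(\Z)$ is torsion-free nilpotent and not abelian, and $\Q$ and $\Z\bigl[\frac12\bigr]$ are torsion-free abelian and not free abelian, so the theorem would indeed follow from your claims (i) and (ii). The problem is that the centralizer lemma on which both (i) and (ii) rest is false. If $g\in V$ has infinite order but is supported on a proper clopen subset --- say $g$ acts as an infinite-order element on the cone of sequences beginning with $0$ and as the identity on the cone of sequences beginning with $1$ --- then the entire copy of $V$ supported on the second cone commutes with $g$, so $C_V(g)$ contains an isomorphic copy of $V$ and is nowhere near virtually $\Z^m$. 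Worse, these elements fix every attractor of $g$ and have trivial germ there, so they lie in your $\ker\phi$, which is therefore not finite (it is not even torsion); your ``bounded order'' claim is exactly where the counterexample bites. Similar problems occur whenever $g$ has torsion-like or identity behaviour on part of the Cantor set; the actual structure of centralizers in $V$ is computed in the paper cited here as \cite{BleakMatucciREU}, and centralizers of infinite-order elements routinely contain copies of $V$ and wreath-type products. (A smaller issue: infinite-order elements of $V$ have attracting \emph{periodic} orbits rather than fixed points in general, but that could be absorbed by passing to a power or a finite-index subgroup; the kernel claim cannot.) Since (i) and (ii) are deduced solely from the lemma, the proof collapses at its key step, even though the surrounding algebra (torsion-free nilpotent and virtually abelian implies abelian, the finite-index argument in (ii)) is correct.

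For comparison, the argument attributed to Higman does not attempt to control centralizers of single elements. It establishes structural facts about torsion-free abelian subgroups $H\leq V$ of finite rank: $H$ is free abelian, $C_V(H)$ has finite index in $N_V(H)$, and there is a direct-factor statement about $N_V(H)$; separately, an infinite-order element of $V$ admits $k$th roots for only finitely many $k$. The statements about $\Q$ and $\Z\bigl[\frac12\bigr]$ follow from either the freeness statement or the roots statement, and the $\mathrm{SL}_3(\Z)$ and torsion-free nilpotent statements follow by playing the normalizer/centralizer constraints against known properties of those groups. Any repair of your approach would have to replace the false single-element centralizer lemma with subgroup-level statements of this kind, which is precisely the hard content of the theorem rather than a routine tree-pair argument.
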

\begin{proof}[Sketch of proof]
Higman proves that if $H\leq V$ is a torsion-free abelian group of finite rank (i.e.\ $H\otimes\Q$ is finite-dimensional), then:
\begin{enumerate}
    \item $H$ is free abelian,\smallskip
    \item The centralizer $C_V(H)$ has finite index in the normalizer $N_V(H)$, and\smallskip
    \item There exists a direct factor $B$ of $N_V(H)$ and a free abelian subgroup $C\leq B$ such that $C$ has finite index in~$HB$.
\end{enumerate}
Combining this with known properties of $\mathrm{SL}_3(\Z)$ and torsion-free nilpotent groups yields the desired results.

Higman also proves that if $f\in V$ has infinite order, then there exist only finitely many $k$ such that $f$ has a $k$th root.  This gives an alternative proof that non-cyclic subgroups of $\Q$ do not embed into~$V$.
\end{proof}

For the following theorem, recall that a group $G$ has \newword{Burnside type} if it is infinite, finitely generated, and every element has finite order.  Burnside asked in 1902 whether such groups existed~\cite{Burnside}, and examples were not found until 1964 by Golod and Shafarevich~\cite{Golod}.

\begin{theorem}[R\"over 1999 \cite{RoverPhD,Rov}]
Thompson's group $V$ has no subgroups of Burnside type.\hfill\qedsymbol
\end{theorem}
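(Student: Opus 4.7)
The plan is to prove the stronger statement that every finitely generated torsion subgroup of $V$ is finite; since a Burnside-type group is by definition infinite, this suffices.

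First I would establish a structural lemma: every finite-order element $g \in V$ can be represented by a \emph{balanced} tree pair $(T,T,\sigma)$, where $T$ is a finite rooted binary tree and $\sigma$ is a permutation of its leaves, with $g$ sending each cone $\alpha\C$ canonically onto $\sigma(\alpha)\C$. Starting from an arbitrary tree-pair representation $(T_-, T_+, \tau)$ of $g$, the condition $g^n = 1$ forces any $g$-orbit of leaves to close up after at most $n$ steps; iteratively subdividing both $T_-$ and $T_+$ along these orbits terminates with $T_- = T_+ = T$. In particular each torsion element of $V$ acts as an honest permutation of a finite set of leaves, with trivial action on each cone once one records the canonical identification.

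Now suppose $G = \langle g_1, \ldots, g_k \rangle \leq V$ is a finitely generated torsion subgroup. Applying the structural lemma to each generator yields balanced representations $(T_i, T_i, \sigma_i)$. I would then try to build a finite tree $T^\ast$ that is preserved setwise by every $g_i$, by the following iterative procedure: start from the common refinement of $T_1, \ldots, T_k$, and whenever some $g_i$ fails to map the current set of leaves onto itself (because the image of a leaf is properly subdivided by the current tree, or properly subdivides several of its leaves), refine accordingly and repeat. Once $T^\ast$ has been produced, every element of $G$ permutes its finitely many leaves canonically, so there is an injective homomorphism $G \hookrightarrow \mathrm{Sym}(\mathrm{Leaves}(T^\ast))$, forcing $G$ to be finite and yielding the contradiction.

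The main obstacle is proving that the refinement procedure actually terminates. Without the torsion hypothesis it need not: two generic elements of $V$ can force refinements of unbounded depth, and the refinement closure is in general an infinite tree. The global assumption that \emph{every} element of $G$ is torsion must therefore be used nonlocally. A plausible strategy is an argument by contradiction: if the procedure produced ever-deeper refinements, one would extract from the divergent tower a sequence of words in the generators whose dynamics on $\C$ grow in combinatorial complexity without bound, and in particular one would find a word representing an element of infinite order in $G$, contradicting the Burnside hypothesis. Making this extraction rigorous—presumably by tracking a well-founded complexity built from refinement depth weighted against the torsion exponents of the generators, in the spirit of Higman's analysis of abelian subgroups of $V$ used in Theorem~\ref{thm:NoSL3ZInV}—is the crux of the proof and the step I expect to consume nearly all of the work.
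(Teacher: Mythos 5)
Your overall reduction is the right one---the theorem is equivalent to showing that every finitely generated torsion subgroup of $V$ is finite, and your structural lemma is true: a finite-order element of $V$ does admit a representation $(T,T,\sigma)$ permuting the leaves of a single tree canonically (one can prove this cleanly by taking a cone partition on which all powers of $g$ act by prefix replacement and passing to the common refinement of its $\langle g\rangle$-translates). The paper itself gives no proof of this theorem (it cites R\"over), so the question is whether your sketch could be completed, and here there is a genuine gap exactly where you say the work lies: the termination of the refinement procedure. As you note, termination cannot follow from torsion of the generators alone---indeed $V$ itself is generated by finitely many transpositions, i.e.\ by involutions, so the ``refinement closure'' of finitely many balanced generators is in general an infinite tree and the group generated by finitely many torsion elements can be infinite. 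Thus the hypothesis that \emph{every} word in the generators is torsion must be used, and your proposal never actually shows how. Saying that from a divergent tower ``one would find a word representing an element of infinite order'' is a restatement of the theorem, not a mechanism; and the suggested well-founded complexity ``weighted against the torsion exponents of the generators'' cannot work as stated, since no bound depending only on the generators' exponents can distinguish a finite torsion subgroup from, say, the transposition generating set of all of $V$.

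What is missing is a concrete way to convert non-termination into an infinite-order element of $G$. The standard mechanism (and the heart of R\"over's and related arguments) is dynamical: one must exhibit a word $g$ in the generators and a cone $\alpha\C$ such that $g$ maps $\alpha\C$ canonically onto a proper subcone of itself (an attractor); then $g^k(\alpha\C)$ is a strictly decreasing chain of cones, so $g$ has infinite order, contradicting the torsion hypothesis. Your sketch contains no construction of such a nested pair from the failure of the refinement process, and producing it is not a routine bookkeeping step---it requires analyzing how images of cones under longer and longer words interact (e.g.\ via revealing-pair/dynamical normal forms), which is precisely the nontrivial content of the theorem. As it stands, the proposal is a correct reduction plus an accurate identification of the hard step, but the hard step itself is not addressed, so the proof is incomplete.
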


Note that there are many groups of Burnside type with solvable word problem.  The most prominent examples are self-similar groups such as the Grigorchuk group and the Gupta--Sidki groups, which we will discuss in the next subsection. However, it is also known that for any $m\geq 2$ and any $n\geq 2^{48}$ that is either odd or divisible by $2^9$, the free Burnside group
\[
B(m,n) = \langle s_1,\ldots,s_m \mid w^n=1\text{ for every word }w\text{ over }s_1,\ldots,s_m\rangle
\]
is infinite and has solvable word problem~\cite{Ivanov}.  No embeddings of these groups $B(m,n)$ into finitely presented simple groups are currently known, though explicit embeddings of $B(m,n)$ into finitely presented groups have been given by Ivanov~\cite{Ivanov2}.

Some recent results give much stronger obstructions to embedding into~$V$.

\begin{theorem}The following groups do not embed into Thompson's group $V$:
\begin{enumerate}
    \item {\normalfont(Bleak--Salazar-D\'\i az 2013 \cite{BleakSalazar})} The free product\/ $\Z^2*\Z$;\smallskip
    \item {\normalfont(Corwin 2013 \cite{Corwin})} The restricted wreath product\/ $\Z\wr\Z^2$;\smallskip
    \item {\normalfont(\cite{BleakMatucciREU}, \cite{BMN},  and \cite{BCR})} Any group with cyclic subgroups that are distorted, such as finitely generated nilpotent groups that are not virtually abelian, or Baumslag--Solitar groups $\mathrm{BS}(m,n)$ for\/ $|m|\ne |n|$.
\end{enumerate}
\end{theorem}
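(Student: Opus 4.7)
The unifying tool is the \emph{revealing pair} technology for $V$ developed by Brin and refined by Salazar-D\'\i az: each $f\in V$ has a canonical tree pair encoding its dynamics on $\C$, from which one reads off finitely many \emph{attracting} and \emph{repelling} cones (on which $f$ is eventually contracting, resp.\ expanding) together with finitely many \emph{periodic} cones (on which $f$ permutes subcones by a canonical rule). The plan is to convert each putative embedding into constraints on these combinatorial data and then derive a contradiction case by case.

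For part (3), the intermediate step is to prove that every cyclic subgroup of $V$ is undistorted in any finitely generated subgroup of $V$ containing it. I would use revealing pairs to establish a linear lower bound on the number of leaves in the minimal tree pair of $f^n$ in terms of $n$, and then observe that the number of leaves is itself a linear lower bound on the word length of $f^n$ with respect to any finite generating set. Once cyclic subgroups in $V$ are known to be undistorted, the rest is classical input: a finitely generated nilpotent group that is not virtually abelian has a distorted cyclic central subgroup (witness the Heisenberg-style identity $[x^n,y^n]=[x,y]^{n^2}$), while in $\mathrm{BS}(m,n)$ with $|m|\ne|n|$ the relation $b^{-1}a^m b=a^n$ yields $a^{n^k}=b^{-k}a^{m^k}b^{k}$, making $\langle a\rangle$ distorted.

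For part (1), the strategy is to exploit the rigidity of rank-$2$ free abelian subgroups of $V$. A common refinement of the revealing pairs of a pair of commuting elements $f,g$ produces an $\langle f,g\rangle$-invariant partition of $\C$ into finitely many cones on which $\langle f,g\rangle$ acts by a standard shift on tails of binary sequences. A hypothetical third generator $h$ with $\langle f,g,h\rangle\cong\Z^2*\Z$ inside $V$ would commute with no nontrivial element of $\langle f,g\rangle$; but a ping-pong argument using the attracting and repelling cones of the $h$-conjugates of $f$ and $g$, combined with the common refinement structure, forces some nontrivial reduced word in $h$ and $\langle f,g\rangle$ to act trivially on $\C$, contradicting the free product assumption. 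This is the main obstacle of the theorem, because no distortion-type argument applies: $\Z^2*\Z$ has no distorted cyclic subgroups, so the obstruction must be purely structural.

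For part (2), if $\Z\wr\Z^2$ embedded in $V$ then the base subgroup would contribute an infinite family of commuting elements with pairwise disjoint supports that are freely permuted by a rank-$2$ group of shifts. Passing to the revealing pair of one of the shift generators, the shifted copies of a single fixed nontrivial base element would force that revealing pair to carry a partition of $\C$ into infinitely many cones cycled by a single element of $V$, contradicting the finiteness of the revealing tree pair.
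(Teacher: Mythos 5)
First, note that the survey does not prove this theorem at all---it is stated with citations to \cite{BleakSalazar}, \cite{Corwin}, and \cite{BleakMatucciREU,BMN,BCR}---so you are being compared against the original literature rather than an in-paper argument. Your part (3) is essentially the correct and standard route: infinite-order elements of $V$ have (after passing to a power) attracting/repelling cones coming from revealing-pair dynamics, the image cones of $f^n$ have depth growing linearly in $n$, hence the minimal tree pair of $f^n$ has linearly many carets, and caret count is comparable to word length in $V$; so cyclic subgroups of $V$ are undistorted, which rules out Heisenberg-type nilpotent groups and $\mathrm{BS}(m,n)$ with $|m|\ne|n|$. That matches the cited papers in spirit.

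Parts (1) and (2), however, contain genuine gaps. For (1), the entire content of Bleak--Salazar-D\'\i az's theorem is the step you state in one sentence: that a ping-pong-style analysis of the attracting/repelling cones of $h$-conjugates of $f,g$ ``forces some nontrivial reduced word to act trivially.'' No mechanism is given for why such a word must die, and ping-pong arguments ordinarily certify freeness rather than obstruct it; the actual proof in \cite{BleakSalazar} is a long, delicate dynamical analysis, and your preliminary claim that any $\Z^2$ in $V$ acts ``by a standard shift on tails'' on a finite cone partition is also an oversimplification of the centralizer structure. For (2), your proposed contradiction is false as stated: a single element of $V$ can perfectly well translate infinitely many pairwise disjoint cones along an orbit (any element with an attracting fixed point does this), so ``infinitely many cones cycled by one element'' does not contradict finiteness of a revealing pair. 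Indeed $\Z\wr\Z$ embeds in $V$ (see Theorem~\ref{thm:GroupsVContains}(5), since $V\wr\Z\leq V$), and there the top generator shifts infinitely many disjoint supports of conjugates of a fixed base element---exactly the configuration your argument claims is impossible. Any correct proof of (2) must use the rank-two nature of the acting group in an essential way, as Corwin's thesis does; as written, your argument would ``prove'' that $\Z\wr\Z\not\leq V$, which is wrong.
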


See~\cite{BCR} for a similar list of such results.

Statement (1) above is particularly strong, since many groups of interest are known to contain $\Z^2*\Z$.  For example, it follows from (1) that the only right-angled Artin groups that embed into $V$ are direct products of free groups~\cite{CoHa}.  Also, if $B_n$ is the braid group on $n\geq 4$ strands with standard generating set $\sigma_1,\ldots,\sigma_{n-1}$, then the subgroup $\bigl\langle\sigma_1^2,\sigma_2^2,\sigma_3^2\bigr\rangle$ is isomorphic to $\Z^2*\Z$~\cite{DLS}, and hence $B_n$ cannot embed into $V$ for~$n\geq 4$.  Note that Boone--Higman embeddings for right-angled Artin groups are now known (see Theorem~\ref{thrm:summary}), but it remains an open question whether braid groups can be embedded into finitely presented simple groups (see Problem~\ref{prob:summary}).

Based on these newer results, it is now clear that the possible subgroups of $V$ are fairly restricted.  However, the following question about subgroups of $V$ remains open.

\begin{question}
Do any one-ended hyperbolic groups embed into~$V$?  For example, does $V$ contain the fundamental group of any closed hyperbolic surface?
\end{question}

\subsection{Scott's groups and R\"over--Nekrashevych groups}\label{Subsec:Roever}
By the results of Higman and R\"over, $V$ is not a container for arithmetic groups or for groups of Burnside type.  In 1984, Elizabeth Scott solved this problem for arithmetic groups by constructing a new family of finitely presented simple groups $\mathrm{Sc}(n)$ that contain $\mathrm{GL}_n(\Z)$.

\begin{theorem}[Scott 1984 \cite{Scott2}]\label{thrm:scott}
For $n\geq 1$, the group $\mathrm{GL}_n(\Z)$ embeds into a finitely presented simple group.
\end{theorem}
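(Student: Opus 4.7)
The plan is to construct, for each $n\geq 1$, a finitely presented simple group $\mathrm{Sc}(n)$ containing $\mathrm{GL}_n(\Z)$, built in the spirit of Thompson's group $V$ but with $\mathrm{GL}_n(\Z)$ providing the ``local'' symmetries. The natural stage is the $n$-torus $\R^n/\Z^n$ equipped with its standard dyadic cube decomposition: it is a union of $2^n$ half-scale sub-cubes, each affinely equivalent to the whole torus, and $\mathrm{GL}_n(\Z)$ acts on it as the quotient of its linear action on $\R^n$. I would then define $\mathrm{Sc}(n)$ to be the group of those homeomorphisms of $\R^n/\Z^n$ that can be described by a pair of dyadic cubical partitions $\{C_1,\dots,C_k\}$, $\{D_1,\dots,D_k\}$ together with matching affine bijections $C_i\to D_i$ whose linear parts lie in $\mathrm{GL}_n(\Z)$. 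The global linear action of $\mathrm{GL}_n(\Z)$ is a subgroup, yielding the desired embedding, and when $n=1$ one recovers a close relative of $V$.

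To verify finite presentability, I would imitate the standard Brown--Stein--Farley strategy: let $\mathrm{Sc}(n)$ act on the geometric realization of the poset of dyadic cubical partitions of $\R^n/\Z^n$ decorated with $\mathrm{GL}_n(\Z)$-labels. The action is cocompact, and the stabilizer of a partition is (virtually) a finite product of copies of $\mathrm{GL}_m(\Z)$ for $m\leq n$. Since each $\mathrm{GL}_m(\Z)$ is virtually of type $\mathrm{F}\!_\infty$ by Borel--Serre, Brown's finiteness criterion will give finite presentability of $\mathrm{Sc}(n)$, provided one has first established enough connectivity of the partition complex. Simplicity would be proved by the same commutator/transposition trick sketched above for $V$: given a nontrivial $g\in\mathrm{Sc}(n)$, pick a dyadic cube $C$ with $g(C)\cap C=\emptyset$ and take the commutator of $g$ with a piecewise-linear ``generalized transposition'' supported on $C\cup g(C)$; this produces a canonical transposition-like element, one shows such elements are all conjugate and generate the whole group.

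The main obstacle will be verifying finite presentability. Unlike the case of $V$, where the cell stabilizers in the Stein--Farley complex are finite, the stabilizers here carry iterated $\mathrm{GL}_m(\Z)$ factors, so one must control the connectivity of the partition complex and the finiteness type of every stabilizer in tandem, pulling in the full strength of the reduction theory of arithmetic groups. A secondary subtlety is to pin down the right variant of the construction that is genuinely simple: the most naive definition may carry a $\det$-style character that must be killed by passing to a finite-index subgroup, which still contains $\mathrm{SL}_n(\Z)$; the full group $\mathrm{GL}_n(\Z)$ can then be recovered by enlarging the allowed local pieces or by a small central extension, taking care not to reintroduce a nontrivial quotient.
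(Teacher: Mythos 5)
Your plan is aiming at the right object (the paper's proof is exactly a sketch of Scott's construction, cited for finite presentability and simplicity), but the specific model you propose does not work, and the failure is precisely at the embedding you need. If $C$ is a dyadic cube in $\R^n/\Z^n$ and $A\in\mathrm{GL}_n(\Z)$ is not a signed permutation matrix, then $A$ maps $C$ to a parallelepiped, never to a dyadic cube; since you require both partitions to consist of dyadic cubes and the linear parts to lie in $\mathrm{GL}_n(\Z)$ (which are volume-preserving, so the cubes $C_i$ and $D_i$ would also be forced to have equal side length), the only local linear parts your definition actually admits are signed permutation matrices. In particular the global linear action of $\mathrm{GL}_n(\Z)$ on the torus is \emph{not} an element of the group you defined, so the claimed embedding collapses, and there are no Thompson-style rescaling elements either. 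A second, independent problem is that for $n\geq 2$ a piecewise-affine shuffle of dyadic cubes is not a homeomorphism of the torus: the pieces do not glue along cube boundaries, which is why $V$-type groups must act on a totally disconnected space rather than on a connected manifold.

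The repair is Scott's actual construction, as sketched in the paper: replace the torus by $\Z^n$ (equivalently by the Cantor space of ends of the rooted $2^n$-ary tree $\mathcal{T}(\Z^n,2)$ of cosets of $\Z^n\geq(2\Z)^n\geq(4\Z)^n\geq\cdots$, i.e.\ the $2$-adic points $\Z_{(2)}^n$), and let $\mathrm{Sc}(n)$ consist of bijections of $\Z^n$ that map the pieces of one finite partition into such cosets to the pieces of another by affine maps, necessarily of the form $x\mapsto 2^{j}Ax+b$ with $A\in\mathrm{GL}_n(\Z)$. The crucial structural point your cube model lacks is that $\mathrm{GL}_n(\Z)$ preserves each subgroup $(2^k\Z)^n$ and hence permutes its cosets, so the full affine group $\mathrm{Aff}(\Z^n)=\Z^n\rtimes\mathrm{GL}_n(\Z)$ sits inside $\mathrm{Sc}(n)$ as the one-piece ($m=1$) case, giving the embedding of $\mathrm{GL}_n(\Z)$ for free; moreover each coset is affinely isomorphic to $\Z^n$ itself, which is the self-similarity that makes the Thompson-like structure exist at all. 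Once the construction is the correct one, the theorem follows by citing Scott for simplicity and finite presentability (this is all the paper does); your Brown-criterion programme is closer to how type $\Finfty$ is established in the Röver--Nekrashevych framework, but there the relevant cell stabilizers are built from copies of $\mathrm{Aff}(\Z^n)$ and finite permutation groups, not from products of $\mathrm{GL}_m(\Z)$ with $m\leq n$, so even that part of your outline would need to be redone in the corrected model.
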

\begin{proof}[Sketch of Proof]
For fixed $n\geq 2$, let $\mathcal{T}(\Z^n,2)$ be the rooted $2^n$-ary tree of all cosets of the subgroups $\Z^n\geq (2\Z)^n \geq (4\Z)^n \geq \cdots$. 
Then Scott's group  $\mathrm{Sc}(n)$ is the group of all bijections $f\colon \Z^n\to \Z^n$ with the following property: there exist partitions $A_1,\ldots,A_m$ and $B_1,\ldots,B_m$ of $\Z^n$ into cosets from $\mathcal{T}(\Z^n,2)$ such that $f$ maps each $A_i$ to $B_i$ by an affine function.   Note that when $m=1$ the resulting bijection can be any element of the affine group $\mathrm{Aff}(\Z^n)=\Z^n\rtimes \mathrm{GL}_n(\Z)$, so this is a subgroup of $\mathrm{Sc}(n)$. Scott proved that $\mathrm{Sc}(n)$ is finitely presented and simple.
\end{proof}

\begin{remark}
The group $\mathrm{Sc}(n)$ acts by homeomorphisms on the Cantor space of ends of $\mathcal{T}(\Z^n,2)$, which can be naturally identified with the set $\Z_{(2)}^n$ of all $n$-tuples of \mbox{$2$-adic} integers.  Because $\mathrm{Sc}(n)$ is a R\"over--Nekrashevych group (see below) and $\mathrm{Aff}(\Z^n)$ has type~$\mathrm{F}\!_\infty$ (see \cite[Section~VIII.9]{BrownCohomologyBook}), it follows from a result of Rachel Skipper and the fourth author that $\mathrm{Sc}(n)$ has type~$\mathrm{F}\!_\infty$~\cite{SkZa}.
\end{remark}

Scott also considered a vast generalization of this construction~\cite{Scott1}. In the terminology of Nekrashevych~\cite{NekBook}, a group $G\leq \Aut(\mathcal{T}_d)$ of automorphisms of the infinite rooted $d$-ary tree $\mathcal{T}_d$ is \newword{self-similar} if the image of $G$ under the natural isomorphism $\Aut(\mathcal{T}_d)\to \Aut(\mathcal{T}_d)^d\rtimes S_d$ lies in $G^d\rtimes S_d$.   Given a self-similar group~$G$, Scott constructed a group $V_d(G)$ of homeomorphisms of the Cantor space of ends of $\mathcal{T}_d$, namely the group generated by the Higman--Thompson group $V_d=V_{d,1}$ and an isomorphic copy of $G$ acting on a level-one subtree of $\mathcal{T}_d$.  By now, these groups $V_d(G)$ have come to be known as \newword{R\"over--Nekrashevych groups}.  Scott's groups $\mathrm{Sc}(n)$ are one example of these, where $d=2^n$ and $G$ is the affine group $\mathrm{Aff}(\Z^n)\cong \Z^n\rtimes \mathrm{GL}_n(\Z)$\footnote{Though this is implicit in Scott's work, the self-similar action of $\mathrm{Aff}(\Z^n)$ on $\mathcal{T}_{2^n}$ was later described explicitly by Brunner and Sidki \cite{BrunnerSidki}.}.

Scott proved that $V_d(G)$ always has simple commutator subgroup, and that $V_d(G)$ is finitely presented whenever $G$ is.  In 1999, Claas R\"over used Scott's construction to give the first embedding of a group of Burnside type into a finitely presented simple group.

\begin{theorem}[R\"over 1999 \cite{RoverPhD,Rov}]
If $G\leq \Aut(\mathcal{T}_2)$ is Grigorchuk's group, then $V_2(G)$ is finitely presented and simple.
\end{theorem}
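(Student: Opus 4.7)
The plan is to establish simplicity first, where Scott's general theorem does most of the work, and then focus the main effort on finite presentability, which requires a genuinely new idea exploiting the contracting self-similarity of Grigorchuk's group.

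For simplicity, I would invoke Scott's result recalled above that the commutator subgroup $[V_2(G), V_2(G)]$ is simple whenever $G$ is self-similar, so it suffices to show that $V_2(G)$ is perfect. Since $V_2 \leq V_2(G)$ is simple and hence perfect, and since the abelianization of Grigorchuk's group is the finite $2$-group $(\Z/2)^3$ generated by the images of the standard generators $a,b,c,d$, the task reduces to showing that each of $a,b,c,d$ lies in $[V_2(G), V_2(G)]$. The ``rooted'' generator $a$ acts as a single tree-level swap, and so a suitable $V_2$-conjugate of $a$ is a standard Thompson transposition, which is already a commutator in $V_2$. For the ``directed'' generators $b,c,d$, I would exploit the recursions $b=(a,c)$, $c=(a,d)$, $d=(1,b)$: each right-hand side can be rewritten, after conjugation by appropriate elements of $V_2$ that rearrange tree addresses, as a commutator of a Thompson element with $a$ or with a shorter directed generator. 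Feeding these expressions into one another exhibits each of $b,c,d$ as a product of commutators in $V_2(G)$.

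For finite presentability, the starting point is Lysenok's well-known infinite presentation of $G$, with generators $\{a,b,c,d\}$, a finite set of base relations such as $a^2=b^2=c^2=d^2=bcd=1$, and an infinite family of relations obtained by iteratively applying a substitution $\sigma$ to finitely many seed relations; the substitution $\sigma$ encodes the self-similar embedding $g \mapsto (g|_0, g|_1)$ of $G$ into its wreath product with $S_2$. The key observation is that inside $V_2(G)$, the substitution $\sigma$ is implemented by conjugation by a specific element $\tau \in V_2$ that realizes the address change one level deeper in the tree. Thus, taking a finite presentation of $V_2$ (which exists by the theorem recalled earlier), adjoining the symbols $b,c,d$, and imposing the finite base relations of $G$ together with finitely many ``mixing'' relations of the form $\sigma(g) = \tau g \tau^{-1}$ that express the recursive structure, should yield a finite presentation in which every Lysenok relation follows from an earlier one by conjugation by $\tau$. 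The main obstacle is to verify rigorously that these finite mixing relations really derive the entire infinite Lysenok family, and, dually, that the presentation does not collapse $V_2(G)$ to a proper quotient; I would address this by constructing an explicit faithful action of the abstractly presented group on the Cantor set $\{0,1\}^\omega$, using the mixing relations to define the actions of $b,c,d$ in terms of Thompson elements and themselves, and checking that it agrees with the standard action of $V_2(G)$.
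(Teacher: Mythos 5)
The paper does not actually prove this theorem---it only cites R\"over---but your overall plan (Scott's simplicity of the commutator subgroup plus perfectness, and finite presentability via Lysenok's L-presentation with the substitution $\sigma$ realized inside the big group) is indeed the spirit of R\"over's cited argument. However, two concrete steps in your sketch do not work as stated. First, the perfectness argument is circular: the recursion $b=(a,c)$, $c=(a,d)$, $d=(1,b)$ cycles $b\to c\to d\to b$, so there is no ``shorter directed generator'' to induct on. Working modulo $[V_2(G),V_2(G)]$, the wreath recursions (together with the fact that $a$ and all Thompson elements die) only yield that the images of $b,c,d$ are \emph{equal} to a common class $x$; nothing in that computation forces $x=0$, and indeed one must additionally invoke the defining relations $b^2=c^2=d^2=bcd=1$ of Grigorchuk's group (so $x=\bar d=\bar b+\bar c=2x=0$) to conclude perfectness. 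This is easily repaired, but as written the ``feeding the expressions into one another'' step cannot terminate.

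Second, the ``mixing relations'' $\sigma(g)=\tau g\tau^{-1}$ with $\tau\in V_2$ are simply false in $V_2(G)$ if $g$ denotes the generator acting at the root: for example $\sigma(b)=d$ has support contained in the cone $1\mathfrak{C}$, while $\supp(b)=\mathfrak{C}\setminus\{111\cdots\}$ is dense in $\mathfrak{C}$, and conjugation by any homeomorphism of $\mathfrak{C}$ preserves density of support, so $d$ is not conjugate to $b$ by \emph{any} homeomorphism, let alone an element of $V_2$. What is true (and is the engine of R\"over's proof) is weaker: placing the copy of $G$ at a vertex, conjugation by a suitable Thompson element realizes $\sigma$ only \emph{up to correction factors supported on the sibling cone}; for a word $w$ one gets $\sigma(w)$ decomposing as $p(w)$ on one cone and (a conjugate copy of) $w$ on the other, where $p(w)$ is a word in the finite dihedral subgroup $\langle a,d\rangle$. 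Showing that, for the seed relators and their $\sigma$-iterates, these correction words are killed by finitely many relations---so that the entire infinite Lysenok family is derivable---is precisely the hard core of the finite presentability proof, and your proposal explicitly defers it (``the main obstacle'') rather than resolving it; the suggested remedy of building a faithful action of the abstractly presented group, with $b,c,d$ defined ``in terms of Thompson elements and themselves,'' is as stated circular. So the architecture is right, but the argument has genuine gaps at both the perfectness step and the derivation of the Lysenok relators.
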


Note that Grigorchuk's group is not itself finitely presented, so it is surprising that $V_2(G)$ would be.

In 2004, Volodymyr Nekrashevych placed the groups $V_d(G)$ in the context of the existing theory of self-similar groups, and observed some important connections between these groups and $C^*$-algebras~\cite{Nek}.  He also gave an algorithm to compute the abelianization of $V_d(G)$.  In particular, the group $V_d(G)$ is not always simple, though it is in Scott's and R\"over's examples. 
 Subsequent work has established additional finiteness properties for several classes of these groups \cite{BeMa,Nek2,SkZa,BHM2}, including the  first examples of simple groups with arbitrary finiteness lengths~\cite{SWZ}.

\subsection{The group $V\!\mathcal{A}$}\label{sec:VA}

In \cite{BHM}, the first author, James Hyde, and the third author gave the first explicit description of a finitely presented group $\overline{T}$ that contains the additive group $\Q$ of the rational numbers.  Specifically, $\overline{T}$ is the group of all homeomorphisms $f\colon \R\to \R$ that are lifts of elements of Thompson's group $T$, i.e.\ 
 $f$ lies in $\overline{T}$ if there exists an element $g\in T$ so that $p\circ f= g\circ p$, where $p\colon \R\to S^1$ is the universal covering. This group $\overline{T}$ was introduced by Ghys and Sergiescu in 1987 as part of their investigation into the cohomology of~$T$ \cite{GhSe}.

 The group $\overline{T}$ is contained in a larger finitely presented group $\mathcal{A}$ introduced by Brin, which has index two in the automorphism group of Thompson's group~$F$~\cite{Brin0}.  There is a natural action of $\mathcal{A}$ on the Cantor set, and in upcoming work the first author, Hyde, and the third author consider the group $V\!\A$ of homeomorphisms of the Cantor set generated by $\mathcal{A}$ and Thompson's group~$V$.  
 
\begin{theorem}[Belk--Hyde--Matucci 2023 \cite{BHM2}]\label{thm:VA}
Every countable abelian group embeds into a finitely presented simple group, namely the group $V\!\A$.
\end{theorem}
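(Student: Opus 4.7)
The plan is to decouple the theorem into two largely independent tasks: (i) showing that $\VA$ is itself finitely presented and simple, and (ii) embedding every countable abelian group into $\VA$. Task (ii) reduces, via the argument in the proof of Corollary~\ref{cor:HigmanAbelian}, to producing an embedding of $\bigl(\bigoplus_\omega \Q\bigr)\oplus \bigl(\bigoplus_\omega \Q/\Z\bigr)$ into $\VA$: every countable abelian group embeds into a countable divisible abelian group, which is a direct sum of copies of $\Q$ and of quasicyclic groups, and each quasicyclic group embeds into $\Q/\Z$.

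For the $\Q$-summands, the chain $\Tbar \leq \A \leq \VA$ together with the Belk--Hyde--Matucci theorem \cite{BHM} gives a single embedded copy of $\Q$ inside $\VA$. The next step is to locate a copy of $\Q$ in $\VA$ whose support for the action on the Cantor set $\C$ lies inside a single cone $\alpha\C$; I expect this to be accessible by exploiting either a self-similarity property of the $\A$-action on $\C$ or a direct conjugation argument using $V$, and it should not require fundamentally new ideas beyond those in \cite{BHM}. Once such a cone-supported copy is in hand, pick a partition $\alpha_1\C,\alpha_2\C,\ldots$ of $\alpha\C$ into countably many subcones and conjugate the cone-supported copy by elements of $V\leq \VA$ that send $\alpha\C$ onto each $\alpha_i\C$; the resulting copies commute pairwise (being disjointly supported homeomorphisms) and generate $\bigoplus_\omega \Q$ inside $\VA$. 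For the other side, $\bigoplus_\omega \Q/\Z$ is a countable locally finite group and so embeds into $V\leq \VA$ by Higman's Theorem~\ref{thm:GroupsVContains}(3). Confining the $\Q$-summands to the cone $0\C$ and the $\Q/\Z$-summand to $1\C$ arranges disjoint supports, and the internal direct product then realises the required embedding of $\bigl(\bigoplus_\omega \Q\bigr)\oplus \bigl(\bigoplus_\omega \Q/\Z\bigr)$.

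The main obstacle is task (i), which is presumably the technical heart of \cite{BHM2}. For simplicity, I would attempt to adapt the transposition-commutator argument for $V$ sketched earlier: given nontrivial $n\in \VA$, locate disjoint cones $\alpha\C,\beta\C$ on which $n$ acts in a controlled way, and then compute commutators of $n$ against transpositions from $V\leq \VA$ to catch a generating set of transpositions inside the normal closure of $n$. The wrinkle is that $n$ may come from $\A$ and need not act by prefix replacement, so an additional geometric analysis of the $\A$-action on $\C$ is needed to produce such cones. For finite presentability, I would start from Brin's finite presentation of $\A$ \cite{Brin0} together with a finite presentation of $V$ (such as the seven-relator one of Bleak--Quick in \cite{BleakQuick}) and verify that the amalgamation relations forced by the common action of $\A$ and $V$ on $\C$ reduce to finitely many additional relators---most cleanly by exhibiting a contractible complex on which $\VA$ acts cocompactly, in the Stein--Farley spirit of \cite{BrGe} and \cite{FarleyTV}.
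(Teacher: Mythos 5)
Your embedding half is essentially the argument the paper gives: the reduction via divisible groups (as in Corollary~\ref{cor:HigmanAbelian}) to embedding $\bigl(\bigoplus_\omega \Q\bigr)\oplus\bigl(\bigoplus_\omega \Q/\Z\bigr)$, the copy of $\Q$ via $\Q\leq\overline{T}\leq\A\leq\VA$, and the copy of $\Q/\Z$ inside $V\leq\VA$ via Theorem~\ref{thm:GroupsVContains}(3) are exactly the ingredients used; where you conjugate cone-supported copies around by $V$, the paper simply invokes the containment $\bigoplus_\omega\VA\leq\VA$. One caution even here: a copy of $\Q$ inside $\overline{T}$ acts on the circle without global fixed points, so it is not supported in any cone, and ``confining'' it to a cone requires knowing that $\VA$ is closed under taking an element that agrees with (a conjugate of) an element of $\VA$ on a cone and is the identity elsewhere --- a fullness-type statement about $\VA$ that does not follow formally from the definition of $\VA$ as the group generated by $\A$ and $V$, and which is part of what \cite{BHM2} establishes. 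Since the paper's sketch asserts $\bigoplus_\omega\VA\leq\VA$ at the same level of detail, I regard your treatment of this point as acceptable.

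The genuine gap is in your task (i), specifically simplicity. Catching all transpositions in the normal closure of a nontrivial $n\in\VA$ shows only that every nontrivial normal subgroup $N$ contains $V$; to conclude $N=\VA$ you still need $\A\leq\nc{V}$ in $\VA$, i.e., that the quotient $\VA/\nc{V}$, which records exactly the germ data carried by elements of $\A$ at singular points, is trivial. This is the delicate point, and it can genuinely fail for groups of this shape: as recalled in Section~\ref{Subsec:Roever}, the R\"over--Nekrashevych groups $V_d(G)$ --- likewise generated by a Thompson-type group together with a group acting with extra germs --- are not always simple, even though your transposition-commutator outline applies to them verbatim. So your simplicity argument cannot close without an ingredient specific to $\A$, which is the content of the ``finite germ extension'' machinery of \cite{BHM2}. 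Similarly, the finite-presentability plan (combining presentations of $\A$ and $V$, or building a Stein--Farley-type complex) is a program rather than a proof: $\VA$ is not an amalgam of $\A$ and $V$ over their intersection, and producing a suitable cocompact complex is the technical heart of \cite{BHM2}. Note that the paper itself does not prove these claims either --- it cites \cite{BHM2} --- so either do the same, or be aware that the simplicity step as you have written it would not go through.
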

\begin{proof}[Sketch of Proof]
Just as Thompson's group $V$ contains the direct sum $\bigoplus_\omega V$, the group $\VA$ contains $\bigoplus_\omega \VA$.  As mentioned in Theorem~\ref{thm:GroupsVContains}, it is possible to embed $\Q/\Z$ into $V$ and hence into $\VA$, and $\Q\leq \overline{T}\leq \A\leq \VA$ as described above.  It follows that $\VA$ contains $\bigl(\bigoplus_\omega \Q\bigr)\oplus\bigl(\bigoplus_\omega \Q/\Z\bigr)$, and therefore $\VA$ contains every countable abelian group.  It is proven in \cite{BHM2} that $\VA$ is simple and finitely presented---indeed it has type~$\Finfty$.
\end{proof}

Note here that the class of all countable abelian groups is very complicated.  For example, the isomorphism problem for countable, torsion free abelian groups is \mbox{$\Sigma_1^1$-complete}~\cite{DoMo}.  Thus the group $\VA$ has a very intricate subgroup structure.

\begin{remark}Despite containing many more abelian groups than $V$, there are still plenty of examples of (non-abelian) groups with solvable word problem that fail to embed into $\VA$.  As one quick concrete example, we claim that $\mathrm{SL}_n(\Z)$ cannot embed in $\VA$ for $n\ge 3$. The key to seeing this is that any action of any finite index subgroup of $\mathrm{SL}_n(\Z)$ ($n\ge 3$) on any CAT(0) cube complex must fix a point~\cite{Cornulier}. Now using the CAT(0) cube complex from \cite{BHM2} on which $\VA$ acts, we see that if $\mathrm{SL}_n(\Z)$ embeds in $\VA$ then it virtually embeds in a vertex stabilizer, which is virtually an extension of one subgroup of $V$ by another subgroup of $V$. Then repeatedly using the CAT(0) cube complex from \cite{FarleyTV} on which $V$ acts with finite stabilizers, we conclude that no such embedding can exist.
\end{remark}

\subsection{Twisted Brin--Thompson groups}

In 2004, Matthew Brin introduced a family of higher-dimensional Thompson groups~$sV$ ($s\ge 1$) \cite{BrinHigherDimensional}, which we refer to as \newword{Brin--Thompson groups}.  In particular, if $V$ acts on the Cantor set $\mathfrak{C}$, then $sV$ is the group of all homeomorphisms of the Cantor cube $\mathfrak{C}^s$ that locally agree with elements of $V^s$.  All of these groups are simple \cite{BrinHigherDimensional,BrinBakers} and finitely presented~\cite{BrinPresentations,HeMa}.  Indeed, they all have type $\mathrm{F}\!_\infty$~\cite{KoMPNu,FMWZ}.  For $s\geq 2$, these groups have solvable word problem  but unsolvable torsion problem~\cite{BeBl} and unsolvable conjugacy problem~\cite{SaloUnsolvable}, making them perhaps the most natural examples of groups with these properties.

In 2022, the first and fourth authors generalized Brin's groups as follows~\cite{BZ}.  Given any group $G$ of permutations of $\{1,\ldots,s\}$, the wreath product $V\wr G=V^s\rtimes G$ acts on $\mathfrak{C}^s$ in a natural way.  In this case, the associated \newword{twisted Brin--Thompson group} $sV_G$ is the group of all homeomorphisms of $\mathfrak{C}^n$ that locally agree with elements of $V\wr G$.  This definition can be generalized further to the case where $G$ is a group of permutations of an infinite set $S$, with the associated twisted Brin--Thompson group $SV_G$ being the group of all homeomorphisms of $\mathfrak{C}^S$ that locally agree with elements of the restricted wreath product $V\wr G =(\bigoplus_S V) \rtimes G$.

One particularly important example is when $S=G$, and $G$ acts on itself by translation.

\begin{theorem}[Belk--Zaremsky 2022 \cite{BZ}]\label{thm:BZTheorem}
If $G$ is a finitely generated group then $GV_G$ is a finitely generated simple group, and $G$ embeds isometrically into $GV_G$.
\end{theorem}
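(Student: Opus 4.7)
The plan is to realize $\iota\colon G\hookrightarrow GV_G$ as the map sending $g$ to the coordinate-permutation homeomorphism $(x_h)_{h\in G}\mapsto (x_{g^{-1}h})_{h\in G}$ of $\C^G$; this is well defined (each such permutation globally agrees with the element $g\in V\wr G$ with trivial $V$-components), injective (the $G$-action on itself by translation is faithful), and a homomorphism. To prove $GV_G$ is simple, I would adapt the template of the proof of simplicity for $V$ sketched earlier in the excerpt: show that $GV_G$ is generated by its ``dyadic transpositions''---swaps of pairs of disjoint product cones in $\C^G$ via canonical homeomorphisms---and use an analogous commutator identity to show that any nontrivial normal subgroup contains such a transposition. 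Mutual conjugacy of essentially all transpositions then follows from the standard $V$-tricks applied coordinatewise, combined with the $\iota(G)$-translations to match cones supported at different coordinates.

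For finite generation, fix a finite generating set $T$ of $G$ and take as the generating set for $GV_G$ the union $\iota(T)\cup F$, where $F$ is a finite collection of local homeomorphisms, each supported on the $e$-coordinate of $\C^G$ and acting there by one of a fixed finite set of $V$-generators (analogous to those used for Brin's groups $sV$ in \cite{BrinPresentations}). Conjugation of $F$ by $\iota(T)$-words produces analogous local generators at every coordinate $h\in G$, and together these generate every finite-$V$-support element of $V\wr G$. A standard refinement argument---passing to a common dyadic refinement of the source and target partitions of any element of $GV_G$---then lifts this to finite generation of all of $GV_G$.

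The main obstacle is the isometric embedding. The strategy is to pick two distinct points $\bullet,*\in\C$ lying outside the supports of the $V$-generators underlying $F$, and form the test point $p\in\C^G$ defined by $p_e=\bullet$ and $p_h=*$ for $h\ne e$. Then each element of $F\cup F^{-1}$ fixes both $\bullet$ and $*$ in its single coordinate of support and hence fixes not only $p$ but also every $\iota(G)$-translate of $p$, since every coordinate of every such translate is either $\bullet$ or $*$. Meanwhile, $\iota(t)\cdot p$ for $t\in T$ shifts the distinguished ``$\bullet$-coordinate'' of $p$ from $e$ to $t$. Consequently, for any word $\iota(g)=w_n\cdots w_1$ of length $n$ in $\iota(T\cup T^{-1})\cup F\cup F^{-1}$, tracing the trajectory of $p$ through the word yields a walk of length at most $n$ in the Cayley graph of $G$ ending at $g$, whence $|g|_G\le n$; the reverse inequality is automatic. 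The delicate point is to arrange simultaneously that $F$ fix $\{\bullet,*\}$ coordinatewise and that $\iota(T)\cup F$ still generate all of $GV_G$. One expects this to work because the $\iota(G)$-translations let the action of $F$ be ``spread'' across all of $\C^G$, while the refinement argument used for finite generation is robust enough to proceed from this smaller palette of local moves, but this is the step where the construction must be set up with genuine care.
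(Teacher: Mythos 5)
Your embedding map and your simplicity sketch are in the right spirit (the paper itself gives no proof, citing \cite{BZ}, where simplicity is indeed proved via transpositions of bricks and a commutator trick), but the finite-generation step fails as stated. Your set $F$ consists of elements supported on a single coordinate, so $\iota(T)\cup F$ and all its $\iota(G)$-conjugates generate at most a subgroup of the wreath product $V\wr G=\bigl(\bigoplus_G V\bigr)\rtimes G$, which is a proper subgroup of $GV_G$. Already with two coordinates, the two coordinate copies of $V$ commute and generate only $V\times V$, missing the coordinate-mixing ``baker's map''--type elements of Brin's $2V$; the common-refinement argument cannot manufacture them, since every element of your subgroup sends bricks to bricks by maps that act coordinatewise up to a translation of coordinates, whereas generic elements of $GV_G$ genuinely transfer information between coordinates. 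A correct generating set must include, for instance, a finite generating set of a copy of $2V$ supported on a pair of coordinates $\{e,t\}$; this is what \cite{BZ} does, together with a lemma that overlapping copies $\Gamma_1V$, $\Gamma_2V$ generate $(\Gamma_1\cup\Gamma_2)V$ and transitivity of the translation action to move such pairs around, after which $\iota(G)$ plus these copies generate all of $GV_G$.

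The isometric-embedding setup has a second, fatal problem: the ``delicate point'' you flag is in fact impossible. If every generator outside $\iota(T)$ fixed every point of the orbit $P=\{\iota(h)\cdot p: h\in G\}$, then the group they generate with $\iota(T)$ would preserve the countable set $P$ setwise; but $GV_G$ does not preserve $P$ (already a single-coordinate element whose underlying $V$-element moves $\bullet$ to a third point sends $p$ out of $P$, as does a generic brick transposition). So no generating set of your form exists, and the trajectory bound never gets started. The salvageable core of your idea---each non-translation generator should leave some $G$-valued datum unchanged while $\iota(t)$ changes it by exactly one $T$-letter---is how non-distortion is actually proved, but the datum must be invisible to \emph{all} untwisted elements rather than a fixed point: for example the local ``twist germ'' at a point of $\mathfrak{C}^G$ with sufficiently generic coordinates, i.e.\ the $G$-component of the element of $V\wr G$ with which a given element agrees near that point. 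This is well defined at such points, behaves as a cocycle into $G$, vanishes on all generators coming from the untwisted part, and equals $t$ on $\iota(t)$; evaluating it on a word of length $n$ representing $\iota(g)$ yields $|g|_G\le n$ without requiring any generator to fix anything.
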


It is easy to see that $GV_G$ has solvable word problem if and only if $G$ does.  Since a finitely generated simple group has solvable word problem if and only if it is computably presented, this recovers Thompson's result (Theorem~\ref{thm:Tho}) that every finitely generated group with solvable word problem embeds into a finitely generated simple group with a computable presentation. Indeed, we have the following.

\begin{corollary}\label{cor:BZCor}
A finitely generated group $G$ has solvable word problem if and only if the group $GV_G$ is computably presented.\qed
\end{corollary}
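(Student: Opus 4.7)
The proof is essentially a direct assembly of pieces already available in the excerpt, so my plan is to identify the two ingredients that drive each direction and then cite them.

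For the forward direction, suppose $G$ has solvable word problem. My plan is to invoke the observation stated just before the corollary that $GV_G$ has solvable word problem if and only if $G$ does; this is the easy direction of that observation since the embedding $G\hookrightarrow GV_G$ provided by Theorem~\ref{thm:BZTheorem} is explicit, so solvability of the word problem transfers to the larger group by a straightforward reduction (words in the generators of $GV_G$ can be tested for identity by combining the finite description of the local action with the given solution of the word problem for $G$). Then I would conclude by noting that any finitely generated group with solvable word problem is automatically computably presented: enumerate all words in the generators, and use the word-problem algorithm to output exactly those that equal the identity. Hence $GV_G$ admits a computable presentation.

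For the converse, suppose $GV_G$ is computably presented. The plan is to apply Theorem~\ref{thm:BZTheorem}: the group $GV_G$ is finitely generated and simple. Proposition~\ref{prop:ModifyThompsonProof} then gives that $GV_G$ has solvable word problem with respect to the enumerated generating set coming from the computable presentation (finite generation here means that solvability is independent of which finite generating set we pick). Since $G$ embeds in $GV_G$ as a finitely generated subgroup, and the generators of $G$ are given by explicit words in the generators of $GV_G$ (by the construction of $GV_G$), we may test whether any word in the generators of $G$ represents the identity by substituting and applying the word-problem algorithm for $GV_G$. Thus $G$ has solvable word problem.

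There is no real obstacle to overcome: each implication is a one-line application of a result already proved or stated in the paper. If anything, the only step worth flagging is making sure the embedding $G\hookrightarrow GV_G$ of Theorem~\ref{thm:BZTheorem} is sufficiently explicit that words in generators of $G$ can be algorithmically translated into words in generators of $GV_G$, but this is built into the construction of twisted Brin--Thompson groups described in the preceding subsection, so it is immediate.
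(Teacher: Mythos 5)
Your proposal is correct and follows essentially the same route as the paper: it combines the fact that $GV_G$ is finitely generated and simple (Theorem~\ref{thm:BZTheorem}), the observation that $GV_G$ has solvable word problem if and only if $G$ does, and the equivalence for finitely generated simple groups between solvable word problem and computable presentability (via Proposition~\ref{prop:ModifyThompsonProof} in one direction and enumeration of identity words in the other). No gaps to flag.
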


The twisted Brin--Thompson construction can also be used to produce finitely presented simple groups. We say that a group $G$ acting on a set $S$ has \newword{finitely many orbits of pairs} if the induced action of $G$ on $S\times S$ has finitely many orbits.  The following theorem is an improved version of a theorem of the first and fourth authors from~\cite{BZ}, and of the fourth author from~\cite{ZaremskyTaste}.

\begin{theorem}[Zaremsky 2025 \cite{ZaremskyFP}]\label{thm:Z}
Let $G$ be a group acting faithfully on a set~$S$. Suppose that $G$ is finitely presented, the stabilizer of each point in $S$ is finitely generated, and the action of $G$ on $S$ has finitely many orbits of pairs.  Then $G$ embeds as a subgroup of a finitely presented simple group, namely the twisted Brin--Thompson group $SV_G$.
\end{theorem}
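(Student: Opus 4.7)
The plan is to break the theorem into three independent assertions: that $G$ embeds in $SV_G$, that $SV_G$ is simple, and that $SV_G$ is finitely presented. The embedding is essentially built into the definition, since $G$ sits inside the restricted wreath product $V\wr G=(\bigoplus_S V)\rtimes G$ as the right-hand factor and every element of $V\wr G$ gives a homeomorphism of $\mathfrak{C}^S$ that lies in $SV_G$; injectivity of the composite $G\to V\wr G\to SV_G$ then follows from faithfulness of the $G$-action on $S$.

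For simplicity, I would adapt the transposition/commutator argument sketched earlier in the paper for Thompson's group $V$ (and already extended to untwisted Brin--Thompson groups $sV$). Given a nontrivial normal subgroup $N\le SV_G$ and a nontrivial $n\in N$, one locates two disjoint product cones $\mathbf{A},\mathbf{B}\subseteq\mathfrak{C}^S$ with $n(\mathbf{A})=\mathbf{B}$, and commutes $n$ with a small local transposition supported inside $\mathbf{A}$ to force a cone transposition $(\mathbf{A}\;\mathbf{B})$ into $N$. Conjugating by elements of $SV_G$ (crucially using the $G$-twists in order to move between coordinates of $S$) then catches all but finitely many transpositions of cones in $\mathfrak{C}^S$, and these generate $SV_G$.

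The main work, and the main obstacle, is finite presentation. The plan is to construct a Stein--Farley style cube complex $X$ on which $SV_G$ acts, whose vertices parametrize ``dyadic'' product partitions of $\mathfrak{C}^S$ into finitely many pieces with appropriate $S$-labellings, and whose higher cubes correspond to simultaneous expansions of several pieces at once. One then wants $X$ to be simply connected (ideally highly connected, heading towards type~$\Finfty$), and the $SV_G$-action to have finitely many orbits of cells in dimensions $0$, $1$, and $2$, with finitely presented vertex stabilizers and finitely generated edge stabilizers; Brown's criterion then delivers finite presentation. The three hypotheses of the theorem correspond precisely to the three places where this bookkeeping has to go through: finite presentation of $G$ powers the base case for vertex stabilizers (which are essentially extensions of stabilizers in $G$ by direct products of smaller Brin--Thompson-like groups), finite generation of point stabilizers handles edge stabilizers, and the finitely-many-orbits-of-pairs hypothesis is exactly what bounds the number of $SV_G$-orbits of $1$- and $2$-cells, since an edge records an expansion that separates a single piece into a pair of pieces and such pairs are classified up to $SV_G$ by $G$-orbits of pairs in $S$.

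The hardest step will be the combined connectivity analysis of $X$ and the inductive stabilizer analysis: one must recognize vertex stabilizers as twisted Brin--Thompson groups for smaller data $(S', G')$ (to which finite presentation applies by induction on the partition size) semidirect with a stabilizer $G_s\le G$, and then show that the connectivity of $X$ does not degrade under the twist. This is where the technical improvements of \cite{ZaremskyFP} over the earlier treatments in \cite{BZ} and \cite{ZaremskyTaste} are needed, and it is the step I would spend the most effort on.
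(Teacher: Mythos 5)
First, note that this survey does not itself prove Theorem~\ref{thm:Z}: it is quoted from \cite{ZaremskyFP}, with the embedding of $G$ and the simplicity of $SV_G$ already established in \cite{BZ}. Your first two steps are fine as things to quote or sketch: $G$ embeds via the twist homeomorphisms, with injectivity coming from faithfulness of the action on $S$, and simplicity of $SV_G$ holds for \emph{every} faithful action by \cite{BZ}, via a dynamical commutator argument broadly in the spirit of the one you describe. So the entire content of the theorem is the finite presentability statement.

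That is where your proposal has a genuine gap. Your bookkeeping---vertex stabilizers finitely presented because $G$ is, edge stabilizers finitely generated because point stabilizers are, finitely many orbits of $1$- and $2$-cells because of finitely many orbits of pairs---does not match what a Stein--Farley complex for $SV_G$ actually provides. Cells of such a complex are indexed by expansions whose relevant data involve finite subsets of $S$, so vertex and edge stabilizers are built from stabilizers in $G$ of finite subsets of $S$ of size at least two, not from point stabilizers alone; under the stated hypotheses, stabilizers of pairs need not even be finitely generated, so the standard Brown-type criterion you invoke does not apply as described. Running your plan verbatim would only recover the earlier, weaker results of \cite{BZ} and \cite{ZaremskyTaste}, which assume finiteness properties for stabilizers of finite subsets; the whole point of \cite{ZaremskyFP} is the additional argument needed to weaken those hypotheses down to finitely generated point stabilizers. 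There is also no evident induction of the kind you propose: the vertex stabilizers are not twisted Brin--Thompson groups over ``smaller data'' in any way that sets up an induction on partition size. So the step you defer as the hardest is not technical polish to an otherwise complete argument---the correspondence you assert between the three hypotheses and the three finiteness requirements is precisely where the proof breaks, and a further idea (the actual contribution of \cite{ZaremskyFP}) is needed to bridge it.
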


Note that the action of an infinite group $G$ on itself by translation never has finitely many orbits of pairs, so the examples of the form $GV_G$ are not useful in this context.

This theorem has consequences for the theory of self-similar groups.  As defined by Nekrashevych, a self-similar group $G$ is \newword{contracting} if it has a finite ``nucleus'' of elements such that all but finitely many restrictions of any element of $G$ lie in the nucleus (see~\cite[Definition~2.11.1]{NekBook}). Such groups are central to the theory of iterated monodromy groups and limit spaces developed by Nekrashevych.

\begin{corollary}[Belk--Bleak--Matucci--Zaremsky 2023 \cite{BBMZ,BeMa2}]\label{cor:Contracting}
Every contracting self-similar group embeds into a finitely presented simple group.
\end{corollary}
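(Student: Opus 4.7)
The plan is to reduce to Theorem~\ref{thm:Z} by passing through the R\"over--Nekrashevych group. Given a contracting self-similar group $G\leq\Aut(\mathcal{T}_d)$, I would first form the R\"over--Nekrashevych group $V_d(G)$ described in Section~\ref{Subsec:Roever}; this group contains $G$ by construction, and by Nekrashevych's theorem \cite{Nek,Nek2} it is finitely presented (indeed of type $\Finfty$) precisely because $G$ is contracting. This passage through $V_d(G)$ is essential, since $G$ itself need not be finitely presented---Grigorchuk's group being the prime example.

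The next step is to exhibit a faithful action of $V_d(G)$ on a suitable set $S$ so that the hypotheses of Theorem~\ref{thm:Z} are met: the stabilizer of each point of $S$ must be finitely generated, and $V_d(G)$ must act on $S\times S$ with only finitely many orbits. A natural candidate is a countable $V_d(G)$-invariant subset of the Cantor space $\{0,\ldots,d-1\}^\omega$---for instance a single orbit of a chosen eventually periodic point---where the Higman--Thompson subgroup $V_d\leq V_d(G)$ already provides a great deal of transitivity. Once such an $S$ is fixed, Theorem~\ref{thm:Z} yields an embedding $V_d(G)\hookrightarrow SV_{V_d(G)}$ into a finitely presented simple group, and composition with the inclusion $G\hookrightarrow V_d(G)$ gives the desired embedding.

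The main obstacle is the choice of $S$ and the verification of its hypotheses, especially the finite generation of point stabilizers and the finiteness of the orbit count on $S\times S$. The contracting property should be leveraged here to control the complexity of stabilizers: in a contracting self-similar group, the sections of any element at sufficiently deep vertices eventually lie in the finite nucleus, which tightly constrains how a stabilizer can be built up. Combined with the high transitivity of $V_d$ on natural dense subsets of the Cantor set, this should be what guarantees that $S$ has only finitely many orbits of pairs and that each point stabilizer in $V_d(G)$ is generated by a finite collection of elements coming from $V_d$-stabilizers together with finitely many nucleus-driven contributions from $G$.
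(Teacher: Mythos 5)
Your proposal follows essentially the same route as the paper's own sketch: pass from $G$ to the R\"over--Nekrashevych group $V_d(G)$, invoke Nekrashevych's finite presentability result for contracting groups, and then apply Theorem~\ref{thm:Z} to the action of $V_d(G)$ on the orbit of an eventually periodic point (the paper uses the orbit of $\overline{0}$), checking finitely many orbits of pairs and finitely generated point stabilizers. The only slight inaccuracy is attributing type $\Finfty$ to Nekrashevych's theorem---his result gives finite presentability, which is all that is needed here.
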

\begin{proof}[Sketch of Proof]
Nekrashevych proved that if $G\leq \Aut(\mathcal{T}_d)$ is a contracting self-similar group then the associated R\"over--Nekrashevych group $V_d(G)$ is finitely presented~\cite{Nek2}. It is not hard to prove that $V_d(G)$ acts with finitely many orbits of pairs on the orbit of~$\overline{0}$ in the Cantor space of ends of $\mathcal{T}_d$, and that the stabilizer of any point in this orbit is finitely generated.  By Theorem~\ref{thm:Z}, it follows that $V_d(G)$ embeds into a finitely presented twisted Brin--Thompson group, and hence $G$ does as well.
\end{proof}

We should remark that we are unaware of any examples of finitely generated groups with solvable word problem that do not embed into a finitely presented twisted Brin--Thompson group, and it is possible that the family of finitely presented (simple) twisted Brin--Thompson groups is a universal family of Boone--Higman containers. Indeed, the first and fourth authors together with Francesco Fournier-Facio and James Hyde recently proved that this family is universal among all finitely presented simple groups admitting faithful highly transitive actions \cite{BFFHZ}.

\subsection{Boone--Higman for hyperbolic groups}

A finitely generated group $G$ is said to be \newword{hyperbolic} if its Cayley graph with respect to some (equivalently any) finite generating set is a hyperbolic metric space.  What this means is that the Cayley graph satisfies Gromov's thin triangles condition: there exists a $\delta>0$ so that for every triangle $(\gamma_1,\gamma_2,\gamma_3)$ of geodesic paths in the Cayley graph, each $\gamma_i$ lies in the $\delta$-neighborhood of the union of the other two. Hyperbolic groups were introduced by Gromov in 1987~\cite{Gromov}, and have since become a cornerstone of geometric group theory.  See  \cite[Part~III]{BH} for a general introduction.

The class of hyperbolic groups include many groups of interest, such as free groups, free products of finite groups, groups satisfying certain small cancellation conditions, and fundamental groups of compact hyperbolic manifolds.  In addition, Gromov argued that a ``random'' finitely presented group is hyperbolic with probability approaching~$1$, a conjecture later verified independently by Champetier~\cite{Cham} and  Ol'shanskii~\cite{Ol}. 

Dehn proved that the fundamental group of a compact hyperbolic surface always has solvable word problem~\cite{DehnAlgorithm}, through a method now known as \newword{Dehn's algorithm}~\cite[Section~III.$\Gamma$.2]{BH}.  Indeed, it turns out that every hyperbolic group has a special type of presentation called a \newword{Dehn presentation} (see~\cite[Theorem~III.$\Gamma$.2.6]{BH}), and given such a presentation a generalization of Dehn's algorithm provides a solution to the word problem, in fact in linear time.

Very recently, the authors proved the following theorem.

\begin{theorem}[Belk--Bleak--Matucci--Zaremsky 2023 \cite{BBMZ}]\label{thm:Hyperbolic}Every hyperbolic group embeds into a finitely presented simple group.
\end{theorem}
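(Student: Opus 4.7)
The plan is to apply Theorem~\ref{thm:Z} to the hyperbolic group $G$: the goal is to construct a faithful countable $G$-set $S$ with finitely generated point stabilizers and only finitely many orbits of pairs, which then embeds $G$ into the finitely presented simple twisted Brin--Thompson group $SV_G$. The first step is to reduce to the non-elementary case: finite groups and virtually cyclic groups already embed into Thompson's group $V$ by Theorem~\ref{thm:GroupsVContains}, so from now on assume $G$ is non-elementary.

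The natural first candidate is the Gromov boundary $\partial G$, on which $G$ acts as a convergence group with well-controlled point stabilizers (virtually cyclic for loxodromic fixed points). However, this direct choice fails the pair-orbit condition: even for a free group, the diagonal action on $\partial G\times\partial G$ has infinitely many orbits. Thus one must enrich the picture. The strategy I would pursue is to take $S$ to be a countable, $G$-invariant set carrying symbolic decoration drawn from the Cayley graph --- exploiting that hyperbolic groups have only finitely many \emph{cone types} in the sense of Cannon, so that geodesic rays and their asymptotic classes can be described by infinite words over a finite alphabet. One concrete realization is to take $S$ to be the orbit (or a finite union of orbits) of a suitably chosen collection of boundary points together with a marking by an initial cone type, yielding a symbolic model of a dense piece of $\partial G$.

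With such a set $S$, faithfulness of the action is immediate from the faithfulness of the boundary action for non-elementary $G$, and finite generation of point stabilizers follows from the virtual cyclicity (or, more generally, controlled structure) of elementary subgroups of hyperbolic groups. The \emph{hard part} is verifying the finitely many orbits of pairs condition. This is where the combinatorial finiteness of the hyperbolic geometry must be brought fully to bear: using the thin triangles property, the asymptotic behavior of a pair $(\xi,\eta)\in S\times S$ is controlled by the geometry at their common quasi-center in the Cayley graph, and --- thanks to the finiteness of cone types and the regularity of the geodesic language --- this local data takes only finitely many values up to the $G$-action.

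Once the orbit-of-pairs finiteness is established, applying Theorem~\ref{thm:Z} to the triple $(G, S, \text{action})$ yields the desired embedding $G\hookrightarrow SV_G$ into a finitely presented simple group, completing the proof. The principal obstacle throughout is the careful choice of $S$ together with the geometric argument controlling its pair orbits; every other hypothesis of Theorem~\ref{thm:Z} is essentially automatic from standard properties of hyperbolic groups.
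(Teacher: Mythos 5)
There is a genuine gap, and it is at exactly the step you flag as ``the hard part.'' Your plan is to apply Theorem~\ref{thm:Z} \emph{directly to $G$}, with $S$ a union of orbits of boundary points decorated by cone types. But no such $S$ can satisfy the finitely-many-orbits-of-pairs hypothesis. If $S$ contains an orbit $G\xi$ with $\xi\in\partial G$ (say a loxodromic fixed point), then $H=\Stab_G(\xi)$ is elementary, and finitely many $G$-orbits on $G\xi\times G\xi$ means finitely many double cosets $H\backslash G/H$; counting elements of word length at most $n$ in a double coset $HgH$ gives at most roughly $n\cdot|B_n\cap H|^2$, which is polynomial when $H$ is virtually cyclic, whereas a non-elementary hyperbolic $G$ has exponential growth --- so finitely many double cosets is impossible. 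Moreover, adding symbolic decorations (cone types, markings) refines the $G$-set, and any $G$-equivariant refinement can only \emph{increase} the number of orbits of pairs, so the enrichment you propose cannot repair this. The same obstruction is why the infinite-orbit phenomenon you already noticed for $\partial G\times\partial G$ persists for every candidate $S$ of the type you describe; ``finiteness of cone types'' controls the geodesic language but has no bearing on the double-coset count.

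The actual proof avoids this by never applying Theorem~\ref{thm:Z} to $G$ itself. One first embeds $G$ into a much larger group: the group $[[\,G\mid\partial_h G\,]]$ of homeomorphisms of the \emph{horofunction} boundary (not the Gromov boundary, which need not be totally disconnected) that piecewise agree with elements of $G$, after replacing $G$ by the hyperbolic group $G*\Z$ if necessary so that this group is a full, contracting rational similarity group. The two substantive steps are then those of Theorem~\ref{thm:Hyperbolic2}: (a) every full, contracting RSG is finitely presented --- the hard new result, generalizing Nekrashevych's finite presentability theorem for R\"over--Nekrashevych groups of contracting self-similar groups --- and (b) such an RSG satisfies the hypotheses of Theorem~\ref{thm:Z} on an orbit in $\partial_h G$, where it is precisely the \emph{fullness} of the big group (its Thompson-like piecewise transitivity), not the hyperbolic geometry of $G$, that yields finitely many orbits of pairs and finitely generated point stabilizers, as in Corollary~\ref{cor:Contracting}. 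So the missing idea in your proposal is this intermediate passage from $G$ to a full piecewise-$G$ group before invoking the twisted Brin--Thompson machinery.
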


The basic strategy of proof is the following.  For any locally finite, connected graph $\Gamma$, there is an associated totally disconnected metrizable space $\partial_h\Gamma$, known as the \newword{horofunction boundary} of~$\Gamma$ (see~\cite[Chapter~II.8]{BH}).  Automorphisms of $\Gamma$ induce homeomorphisms of the horofunction boundary.  In particular, any finitely generated group $G$ acts by homeomorphisms on the horofunction boundary $\partial_h G$ of its Cayley graph.  In the case of a hyperbolic group, the horofunction boundary $\partial_h G$ is typically homeomorphic to a Cantor set, and there is a finite-to-one quotient map from $\partial_h G$ to the usual Gromov boundary of the group.

Given any finitely generated group $G$ with horofunction boundary $\partial_h G$, one can consider the group $[[\,G\mid \partial_h G\,]]$ of all homeomorphisms of $\partial_h G$ that piecewise agree with elements of~$G$.  That is, a homeomorphism $f\colon \partial_h G\to \partial_h G$ lies in $[[\,G\mid \partial_h G\,]]$ if there exists a partition of $\partial_h G$ into finitely many clopen sets such that $f$ agrees with some element of $G$ on each set of the partition.  Assuming the action of $G$ on $\partial_h G$ is faithful, the resulting group contains~$G$.  For ``nice'' hyperbolic groups $G$, the group $[[\,G\mid \partial_h G\,]]$ has the structure of a so called \newword{rational similarity group (RSG)}, introduced in \cite{BBMZ}, that is \newword{full} and \newword{contracting}.  This follows work of the first three authors, in 2021 ~\cite{BBM2}, proving that any hyperbolic group $G$ embeds as a subgroup of the asynchronous rational group $\mathscr{R}$ introduced by Grigorchuk, Nekrashevych, and Sushchanski\u\i\ (see ~\cite{GNS}).  In particular, it is possible to assign addresses to points in the horofunction boundary $\partial_h G$ so that the action of $G$ is by asynchronous automata.

One key step in the proof of Theorem~\ref{thm:Hyperbolic} is the following.

\begin{theorem}[Belk--Bleak--Matucci--Zaremsky 2023 \cite{BBMZ}]\label{thm:Hyperbolic2}Every full, contracting RSG is finitely presented, and embeds into a finitely presented simple twisted Brin--Thompson group.
\end{theorem}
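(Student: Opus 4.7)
My plan mirrors the strategy used for Corollary~\ref{cor:Contracting}. Given a full, contracting RSG $H$ acting naturally on a Cantor-like boundary space $X$ with rational-similarity addresses, I would (i) verify that $H$ is finitely presented via a Nekrashevych-style nucleus argument, and (ii) verify that the action of $H$ on a suitably chosen orbit $S \subseteq X$ satisfies the three hypotheses of Theorem~\ref{thm:Z}, which then produces the desired embedding into the finitely presented simple twisted Brin--Thompson group $SV_H$.

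For the finite presentation step, I would adapt Nekrashevych's proof that $V_d(G)$ is finitely presented whenever $G$ is contracting~\cite{Nek2}. The contracting hypothesis on the RSG supplies a finite \emph{nucleus} $\mathcal{N}$ such that, for any element of $H$, all but finitely many of its restrictions/sections down the address tree lie in $\mathcal{N}$. Fullness supplies the analogue of the Higman--Thompson rearrangement generators. Together these provide a finite generating set (the nucleus elements together with a few Thompson-style rearrangements), and the natural infinite ``tree-level'' family of relations can be reduced to a finite set by using the nucleus to bookkeep cascading restrictions, exactly as in Nekrashevych's contracting case but now accounting for asynchronous rational automata rather than synchronous ones.

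For the embedding, I take $S$ to be the $H$-orbit of a carefully chosen point of $X$ (e.g.~an address that is eventually periodic with periodic part controlled by the nucleus). Point stabilizers are finitely generated because, under the contracting hypothesis, the sections of any stabilizing element eventually enter and stay in the finite nucleus, which together with fullness furnishes a finite generating set. Finitely many orbits of pairs follows from the strong transitivity afforded by fullness: any ordered pair in $S \times S$ is determined up to the $H$-action by one of finitely many ``nucleus-controlled'' relative configurations. Theorem~\ref{thm:Z} then applies and yields the embedding $H \hookrightarrow SV_H$ with $SV_H$ finitely presented and simple. The main obstacle is the finite presentation step: extending Nekrashevych's nucleus machinery from self-similar groups of tree automorphisms to the asynchronous-automaton world of RSGs requires a delicate analysis of how rational sections interact under composition, and it is precisely here that the fullness hypothesis is indispensable for producing enough Thompson-type generators to tame the relation set.
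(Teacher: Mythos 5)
Your proposal follows essentially the same route the paper describes (the theorem itself is proved in \cite{BBMZ}, and this survey only sketches it): the harder half is finite presentability, obtained by generalizing Nekrashevych's nucleus argument for contracting self-similar groups \cite{Nek2} to the asynchronous/RSG setting, and the embedding half is obtained exactly as in Corollary~\ref{cor:Contracting}, by checking that the action on a suitable orbit satisfies the hypotheses of Theorem~\ref{thm:Z} so that the group embeds into a finitely presented simple twisted Brin--Thompson group. Your plan matches this decomposition, including the identification of the finite-presentation step as the delicate part.
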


Not every hyperbolic group satisfies the conditions for $[[\,G\mid \partial_h G\,]]$ to be a full, contracting RSG, but if $G$ is a non-trivial hyperbolic group then the free product $G*\Z$ is always a hyperbolic group that satisfies these hypotheses, and of course $G$ embeds into $G*\Z$, so Theorem~\ref{thm:Hyperbolic} follows from Theorem~\ref{thm:Hyperbolic2}.

The more difficult part of Theorem~\ref{thm:Hyperbolic2} is proving that full, contracting RSGs are finitely presented.  This involves a generalization of the theorem of Nekrashevych~\cite{Nek2} that the R\"over--Nekrashevych group associated to a contracting self-similar group is always finitely presented, though we should emphasize that R\"over--Nekrashevych groups are only a very special case of RSGs.  As for the other part of Theorem~\ref{thm:Hyperbolic2}, just like in Corollary~\ref{cor:Contracting}, it is not difficult to show that such groups satisfy the hypotheses of Theorem~\ref{thm:Z}, and hence embed into an associated finitely presented simple Brin--Thompson group.

\section{Summary of the state of the art}\label{sec:summary}

Having discussed many examples of finitely presented simple groups, and important groups that embed into them, let us now collect a list of prominent groups with solvable word problem for which the Boone--Higman conjecture is known to be true.

\begin{theorem}\label{thrm:summary}
The following groups embed into finitely presented simple groups:
\begin{enumerate}
\item The groups $\mathrm{GL}_n(\Z)$.\smallskip
\item Virtually nilpotent and virtually polycyclic groups.\smallskip
\item Right-angled Artin groups.\smallskip
\item Coxeter groups.\smallskip
\item Limit groups.\smallskip
\item Graph braid groups. \smallskip
\item Fundamental groups of compact 3-manifolds that admit a Riemannian metric of nonpositive curvature.
\smallskip
\item Fundamental groups of finite-volume hyperbolic 3-manifolds. \smallskip
\item One-relator groups with torsion.\smallskip
\item Countable abelian groups. \smallskip
\item Countable locally finite groups. \smallskip
\item Contracting self-similar groups and finitely presented self-similar groups.\smallskip
\item Hyperbolic groups.
\end{enumerate}
\end{theorem}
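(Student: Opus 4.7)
The plan is to treat each item either by directly invoking one of the embedding theorems established earlier in this paper---Theorem~\ref{thrm:scott}, Theorem~\ref{thm:VA}, Theorem~\ref{thm:GroupsVContains}, Corollary~\ref{cor:Contracting}, or Theorem~\ref{thm:Hyperbolic}---or by first embedding the given group into a group to which one of these theorems applies, using classical results from geometric group theory. Since the class of groups embedding into some finitely presented simple group is clearly closed under taking subgroups, exhibiting any such intermediate embedding is enough.

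First I would dispense with the direct cases. Item~(1) is Theorem~\ref{thrm:scott}; item~(10) is Theorem~\ref{thm:VA}; item~(11) is Theorem~\ref{thm:GroupsVContains}(3); item~(12) follows from Corollary~\ref{cor:Contracting} in the contracting case, and from Theorem~\ref{thm:Z} applied to a natural boundary orbit in the finitely presented case (self-similarity is what gives control over the point stabilizers and the number of orbits of pairs); and item~(13) is Theorem~\ref{thm:Hyperbolic}. Two further items are then handled almost as quickly: (2) reduces to (1), since every virtually polycyclic (and hence every virtually nilpotent) group embeds into $\mathrm{GL}_n(\Z)$ for some $n$ by Auslander--Swan; and (9) reduces to (13), since one-relator groups with torsion are hyperbolic by the Newman--Pride theorem.

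The remaining items (3)--(8) all follow a common template. None of these classes can be embedded into hyperbolic groups in general---for instance, any right-angled Artin group with at least one edge contains $\Z^2$, which is not a subgroup of any hyperbolic group---so one cannot appeal to (13) directly. Instead the plan is to reduce each case to item~(1) through a short chain of well-known embeddings ending in $\Z$-linearity. Right-angled Artin groups embed into $\mathrm{GL}_n(\Z)$ for suitable $n$, via Humphries's or Hsu--Wise's faithful integral representations, handling (3). Coxeter groups embed into right-angled Artin groups by Davis--Januszkiewicz, handling (4). Graph braid groups are fundamental groups of special cube complexes by Abrams's theorem and thus embed into right-angled Artin groups, handling (6). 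Limit groups are virtually special by the cubulation results of Wise, and so they too embed into right-angled Artin groups, handling (5). For the two 3-manifold cases (7) and (8), the fundamental group is virtually special by the combined work of Wise, Agol, Liu, and Przytycki--Wise, once again reducing the problem to (3), and ultimately to (1).

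The main obstacle is item~(8): the fundamental group of a cusped finite-volume hyperbolic 3-manifold is only relatively hyperbolic, not hyperbolic, and proving it is virtually special requires the full weight of Wise's quasiconvex hierarchy machinery together with Agol's resolution of the virtual Haken conjecture. The corresponding virtual specialness inputs for (5), (7), and parts of (6) are of a similar character, and represent the only genuinely deep step in the argument. Once these and the classical fact that right-angled Artin groups are $\Z$-linear are accepted, every item is reduced in at most two steps to an embedding theorem already proved in this paper, and the proof is complete.
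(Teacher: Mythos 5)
Your overall architecture matches the paper's: handle (1), (10), (11), (12), (13) by the theorems already stated, reduce (2) to $\mathrm{GL}_n(\Z)$, and funnel (3)--(9) through right-angled Artin groups into $\mathrm{GL}_n(\Z)$ and hence Scott's groups. (Your route for (9) via hyperbolicity of one-relator groups with torsion and Theorem~\ref{thm:Hyperbolic} differs from the paper, which instead uses Wise's virtual specialness result, but it is a legitimate alternative.) However, there is a genuine gap in how you treat the ``virtually'' qualifiers, and it surfaces most visibly in item (4): the claim that Coxeter groups embed into right-angled Artin groups is false --- every nontrivial Coxeter group has torsion while RAAGs are torsion-free --- and Davis--Januszkiewicz proves the opposite-direction statement (each RAAG is commensurable with a right-angled Coxeter group). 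The correct input is Haglund--Wise: Coxeter groups are \emph{virtually} special, i.e.\ only a finite-index subgroup embeds into a RAAG. The same issue infects (5), (7), and (8): virtual specialness (Wise for limit groups and the cusped hyperbolic case, Agol for the closed case, Liu and Przytycki--Wise for nonpositively curved 3-manifolds) gives an embedding of a finite-index subgroup into a RAAG, and ``virtually special'' does not imply ``special'' even for torsion-free groups, so your phrase ``once again reducing the problem to (3)'' does not follow as stated.

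What is missing is the step the paper makes explicit at the outset of its proof: any group that \emph{virtually} embeds into $\mathrm{GL}_n(\Z)$ actually embeds into $\mathrm{GL}_m(\Z)$ for some $m$, via the Krasner--Kaloujnine embedding of $G$ into $H\wr (G/N)$ for a finite-index normal subgroup $N\leq H$ together with the block-permutation embedding $\mathrm{GL}_n(\Z)\wr S_k\hookrightarrow \mathrm{GL}_{nk}(\Z)$. With that lemma in hand, virtual specialness of Coxeter groups, limit groups, and the 3-manifold groups in (7) and (8) does reduce everything to item (1) (and it is also the clean way to justify the ``virtually polycyclic'' and ``virtually nilpotent'' cases of (2), which you attribute wholesale to Auslander--Swan, whose theorem is about polycyclic groups). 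Note also that the class of groups embedding into finitely presented simple groups is obviously closed under passing to subgroups, as you say, but it is \emph{not} obviously closed under finite extensions, so some such induced-representation argument cannot be dispensed with.
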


\begin{proof}
Statement (1) is Theorem~\ref{thrm:scott}. Now note that any group that virtually embeds into $\mathrm{GL}_n(\Z)$ embeds into $\mathrm{GL}_m(\Z)$ for some $m\geq n$.  This follows from the Krasner--Kaloujnine embedding theorem together with the fact that $\mathrm{GL}_n(\Z)\wr S_k$ embeds into $\mathrm{GL}_{nk}(\Z)$ for all $n$ and~$k$ using block permutation matrices. Statement (2) now follows from the facts that every finitely generated nilpotent group is polycyclic \cite[Theorem 17.2.2]{KargMerz} and that every polycyclic group embeds into $\mathrm{GL}_n(\Z)$ \mbox{\cite{Auslander,Swan}}.  Statement (3) follows from a theorem of Hsu and Wise that finitely generated right-angled Artin groups embed into $\mathrm{SL}_n(\Z)$ \cite{HsuWise}.  Statements (4) through (9) now follow from known results about groups that embed or virtually embed into right-angled Artin groups (see \cite{HaWi2}, \cite{Wise1}, \cite{CrWi}, \cite{PyWi}, \cite{Agol}, and \cite{Wise1}, respectively). Statement (10) is Theorem~\ref{thm:VA}, and statement (11) is in item (3) of Theorem~\ref{thm:GroupsVContains}. Finally, (12) follows from \cite{BBMZ} (see also \cite{BeMa2}) and \cite{ZarSelfSim}, and (13) follows from \cite{BBMZ}.
\end{proof}

\begin{remark}
The first three authors have shown that every right-angled Artin group embeds into some Brin--Thompson group $nV$~\cite{BBM1}, and Salo has recently proven that indeed they all embed into~$2V$~\cite{Salo2V}. It follows that all of the groups (3) through (9) above embed into the single finitely presented simple group~$2V$.
\end{remark}

Now let us collect some concrete examples of groups and families of groups of interest that have solvable word problem but for which, to the best of our knowledge, the Boone--Higman conjecture remains open in general. Some of these overlap or are contained in each other, but all of them are prominent enough to deserve special mention. Also note that some of these have been solved since this survey was first written: see Remark~\ref{rmk:solved}.

 \begin{problem}\label{prob:summary}
    Prove the Boone--Higman conjecture for:
    \begin{enumerate}
        \item Braid groups.\smallskip
        \item Mapping class groups of surfaces.\smallskip
        \item The groups $\mathrm{Aut}(F_n)$ and $\mathrm{Out}(F_n)$.\smallskip
        \item Non-solvable Baumslag--Solitar groups $BS(m,n)$, for example $BS(2,3)$\footnote{Bartholdi and Sunik have shown that solvable Baumslag--Solitar groups $BS(1,n)$ are self-similar~\cite{BaSu}, and they can be seen to embed in finitely presented simple groups, e.g.~following \cite{ZaremskyDehn}.}.\smallskip
        \item $\mathrm{GL}_n(\Q)$.\smallskip
        \item Free Burnside groups $B(m,n)$ with solvable word problem.\smallskip
        \item Finitely presented metabelian groups\footnote{By a theorem of Baumslag and Remeslennikov, every finitely generated metabelian group embeds into a finitely presented one~\cite{Baumslag,Rem}.  Such groups have solvable word problem~\cite{Baumslag2}, though the same does not hold for solvable groups of derived length three~\cite{Har}.}.\smallskip
        \item Free-by-cyclic groups.\smallskip
        \item One-relator groups (without torsion).\smallskip
        \item CAT(0) groups.\smallskip
        \item Automatic groups.\smallskip
        \item Finitely presented residually finite groups.\smallskip
        \item Artin groups with solvable word problem\footnote{It is an open question whether all Artin groups have solvable word problem \cite[Problem~10]{Charney}, but this is known for several large classes, such as right-angled Artin groups (see Theorem~\ref{thrm:summary}) and Artin groups of spherical \cite{Deligne} or Euclidean \cite{McSu} type.}.\smallskip
    \end{enumerate}
 \end{problem}

\begin{remark}\label{rmk:solved}
We should mention that since this survey was first written, some of the items on this list have been resolved. Let us collect this recent progress here. First, Kai-Uwe Bux, Claudio Llosa Isenrich, and Xiaolei Wu in \cite{BLIW} proved the Boone--Higman conjecture for all Baumslag--Solitar groups and all free-by-cyclic groups, so items (4) and (8) are fully handled. Second, the first and fourth author, together with Francesco Fournier-Facio and James Hyde in \cite{BFFHZ}, proved the conjecture for all $\Aut(F_n)$, which also proves it for braid groups and for mapping class groups of surfaces with non-empty boundary or at least one puncture, as well as Artin groups of types $A_n$ (the braid groups), $B_n=C_n$, $D_n$, $I_2(m)$, and $\widetilde{A}_n$. Thus item~(1) is fully handled, and items (2), (3), and (13) are partially handled.  Another case of item (13) was also handled in \cite{BLIW}, namely Euclidean triangle Artin groups.  Item (2) remains open for mapping class groups of closed surfaces of genus~$3$ or greater, item (3) remains open for $\Out(F_n)$ for all $n\geq 3$, and item (13) remains open for many classes of Artin groups, including those of exceptional spherical type and those of Euclidean type other than the~$\widetilde{A}_n$ and triangle cases ($\widetilde{C}_2$ and $\widetilde{G}_2$).
\end{remark}

\begin{remark}One possible approach to item (2) for mapping class groups of closed hyperbolic surfaces involves the action of the mapping class group on the Thurston boundary of Teichm\"uller space. This boundary is homeomorphic to a sphere of dimension $6g-7$, with coordinate charts given by complete train tracks, and William Thurston observed that the mapping class group acts by piecewise-integral-projective (PIP) homeomorphisms with respect to these coordinates~\cite{Thurston}.  Thurston also observed that the group of all PIP homeomorphisms of a circle is isomorphic to Thompson's group~$T$ (see \cite{Greenberg} or \cite[\S 7]{CFP}), and asked whether the groups of PIP homeomorphisms of higher-dimensional spheres are finitely generated~\cite[pg.~193]{Thurston2}.  This question remains open, but if these groups turn out to be finitely presented simple groups, this would yield a natural Boone--Higman embedding for mapping class groups of closed surfaces.
\end{remark}

Finally, we should mention a related question, posed by Leary in \cite[Section~21]{Leary}, which asks whether every finitely presented group embeds into a group with type~$\mathrm{F}\!_3$, or more generally whether every type $\mathrm{F}\!_n$ group embeds into a type $\mathrm{F}\!_{n+1}$ group. One could similarly ask whether every finitely presented group embeds in a type $\mathrm{F}\!_\infty$ group.

\begin{question}
Does every finitely presented group embed as a subgroup of a type $\mathrm{F}\!_\infty$ group?
\end{question}

There do exist type $\mathrm{F}\!_\infty$ groups with unsolvable word problem~\cite{CollinsMiller}, so this is not an impediment. If this question and the Boone--Higman conjecture both have positive answers, then one could further ask whether every finitely generated group with solvable word problems embeds in a simple group of type $\mathrm{F}\!_\infty$.

\bigskip
\newcommand{\arXiv}[1]{\href{https://arxiv.org/abs/#1}{arXiv}}
\newcommand{\doi}[1]{\href{https://doi.org/#1}{Crossref\,}}
\bibliographystyle{plain}

\end{document}